\theoremstyle{plain}
\newtheorem{thm}{Theorem}[section]
\newtheorem{cor}[thm]{Corollary}
\newtheorem{lem}[thm]{Lemma}
\newtheorem{prop}[thm]{Proposition}
\newtheorem{rem}[thm]{Remark}
\newtheorem{conj}[thm]{Conjecture}
\newtheorem{prob}[thm]{Problem}
\def\cal{\mathcal}
\def\bbb{\mathbb}
\def\op{\operatorname}
\renewcommand{\phi}{\varphi}
\newcommand{\N}{\bbb{N}}
\newcommand{\id}{\mathrm{id}}
\newcommand{\adj}{\mathrm{adj}}
\begin{document}

\title[Identities involving Prouhet-Thue-Morse sequence]{Some identities involving Prouhet-Thue-Morse sequence and its relatives}
\author{Jakub Byszewski, Maciej Ulas}

\keywords{Prouhet-Thue-Morse sequence; sum of digits; identities; recurrence relations } \subjclass[2010]{11B85, 11A63, 11B37}
\thanks{The research of the authors is supported by the grant of the Polish National Science Centre no. UMO-2012/07/E/ST1/00185}

\begin{abstract}
Let $s_{k}(n)$ denote the sum of digits of an integer $n$ in base $k$. Motivated by certain identities of
Nieto, and Bateman and Bradley involving sums of the form $\sum_{i=0}^{2^{n}-1}(-1)^{s_{2}(i)}(x+i)^{m}$ for $m=n$ and $m=n+1$,
%\begin{equation*}
%\sum_{i=0}^{2^{n}-1}(-1)^{s_{2}(i)}(x+i)^{n+1}=(-1)^{n}(n+1)!2^{\frac{n(n-1)}{2}}\Big(x+\frac{2^{n}-1}{2}\Big)
%\end{equation*}
we consider the sequence of polynomials
\begin{equation*}
f_{m,n}^{\mathbf u}(x)=\sum_{i=0}^{k^{n}-1}\zeta_{k}^{s_{k}(i)}(x+{\mathbf u}(i))^{m}.
\end{equation*}
defined for sequences ${\bf u}(i)$ satisfying a certain recurrence relation.
%Here, the sequence ${\bf u}$ satisfies a recurrence relation
%\begin{equation*}
%\mathbf{u}(ki+j)=P(\mathbf{u}(i))+jq\quad\mbox{for}\quad j=0,1,\ldots,k-1, i\geq 0
%\end{equation*}
%and $q\in V$, where $V$ is a finitely dimensional vector space over the field $K$
%and $P\colon V\to V$ is a linear endomorphism.

% We study the polynomials $f_{m,n}^{\bf u}(x)$.

 We prove that computing these polynomials is essentially equivalent with computing their constant term and we find an explicit formula for this number. This allows us to prove several interesting identities involving sums of binary digits. We also prove some related results which are of independent interests and can be seen as further generalizations of certain sums involving Prouhet-Thue-Morse sequence.

\end{abstract}

\maketitle

\section{Introduction}\label{Section1}
In this note, we are interested in generalizing  certain identities involving the classical Prouhet-Thue-Morse sequence $t_{n}=(-1)^{s_{2}(n)}$, where we write $s_{k}(n)$ for the sum of digits of the number $n$ written in base $k$ so that $t_n$ depends on the parity of the number of ones in the binary expansion of $n$. The Prouhet-Thue-Morse sequence arises in many areas of mathematics and physics. An interesting and varied collection of applications and properties of the Prouhet-Thue-Morse sequence can be found in a survey article of Allouche and Shallit \cite{AllSh}. Among many interesting identities satisfied by the sequence $t_n$ we have the classical identity involving the polynomial $f_{m,n}(x)$ defined by
\begin{equation*}
f_{m,n}(x)=\sum_{i=0}^{2^{n}-1}t_{i}(x+i)^{m}, \quad m,n\geq 0.
\end{equation*}
The polynomials $f_{m,n}$ vanish when $m<n$. This follows immediately from the identity $\sum_{i=0}^{2^{n}-1}t_{i}i^{m}=0$ for nonnegative integers $m<n$ obtained by Prouhet in 1850s (see also \cite[p. 37]{Fogg} and references given therein). We also have a similar identity for the polynomial
\begin{equation*}
g_{m,n}(x)=\sum_{i=0}^{2^{n}-1}t_{i}(x+s_{2}(i))^{m}=0
\end{equation*}
and integers $m<n$. This is actually much simpler to prove. Furthermore, we have some curious identities for $f_{m,n}(x)$ when $m=n$ and $m=n+1$, namely
\begin{equation*}
f_{n,n}(x)=(-1)^{n}n!2^{\frac{n(n-1)}{2}}, \quad f_{n+1,n}(x)=(-1)^{n}(n+1)!2^{\frac{n(n-1)}{2}}\Big(x+\frac{2^{n}-1}{2}\Big).
\end{equation*}
The proof of the former identity was given by Nieto in \cite{Knuth}. The proof of the latter identity was obtained by Bateman and Bradley \cite{BatBra}. One can also prove an analogous result for the polynomials $g_{m,n}$, namely
\begin{equation*}
g_{n,n}(x)=(-1)^{n}n!, \quad g_{n+1,n}(x)=\frac{(-1)^{n}}{2}(n+1)!(2x+n).
\end{equation*}

The aim of this article is to provide a context in which all these equalities can be obtained as special cases of a more general result.  To be more precise, let us observe that the sequences $s_{2}(n)$ and $n$ both satisfy a recurrence relation: \begin{equation}
\mathbf{u}(2i+j)=p \mathbf{u}(i)+j\quad\mbox{for}\quad j=0,1.
\end{equation}
(with $p=1$ for the sequence $\mathbf{u}(n)=s_{2}(n)$ and $p=2$ for the sequence $\mathbf{u}(n)=n$). This suggests to consider a family of polynomials
\begin{equation*}
f_{m,n}^{\mathbf u}(x)=\sum_{i=0}^{k^{n}-1}\zeta_{k}^{s_{k}(i)}(x+{\mathbf u}(i))^{m},
\end{equation*}
where $k\geq 2$ is an integer, $\zeta_k\neq 1$ is a $k$-th root of unity and $\mathbf{u}(n)$ is a sequence of elements of $V$ which satisfies a recurrence relation
\begin{equation*}
\mathbf{u}(ki+j)=P(\mathbf{u}(i))+jq\quad\mbox{for} \quad i \geq 0, j=0,1,\ldots,k-1,
\end{equation*}
and $q\in V$, where $V$ is a finitely dimensional vector space over a field $K$ and $P\colon V\to V$ is a linear endomorphism. The polynomial $f_{m,n}^{\bf u}$ with a suitably chosen sequence ${\bf u}$ can be seen as a generalization of the polynomials $f_{m,n}$ and $g_{m,n}$ considered before.

We now briefly describe the contents of the paper. In Section 2, we study the polynomials $f_{m,n}^{\mathbf u}(x)$. We start by reducing the problem of computing the polynomial $f_{m,n}^{\bf u}(x)$ to computing its constant term $F^{\bf u}_{m,n}:=f_{m,n}^{\bf u}(0)$. Then, using a simple induction argument, we prove that $F^{\bf u}_{m,n}=0$ for $m<n$. Next, we present several results concerning the computation of $F^{\bf u}_{m,n}$ for $m\geq n$. In particular, we state a recurrence relation satisfied by $F^{\bf u}_{m,n}$  (Theorem \ref{thm1}) and this allows us to compute it in closed form (Theorem \ref{czapka} and formula (\ref{defH}), see also Corollary \ref{closedF}). As a consequence, we get several curious identities involving the function $s_{2}(n)$.

In Section 3, we consider the function $N_{k}(l,i)$ which counts the number of occurrences of a digit $l$ in the expansion of an integer $i$ in base $k$. We define a related function with $k=2b-1$ given by $N_{k,v}(i)=\sum_{j=1}^{b}N_{k}(v_{j},i)$, where $v=(v_{1},\ldots,v_{b})$ is a $b$-tuple of pairwise distinct elements of $\{1,\ldots,2b-1\}$. Using some polynomial identities and a simple differential operator, we prove the identity
$$
\sum_{i=0}^{k^{n}-1}(-1)^{N_{k,v}(i)}i^{m}=0,
$$
where $m<n$ (Theorem \ref{PTMlike}). This identity allows us to think about the sequences $N_{k}(l,i)$ (for a fixed $k$ and $l$) as generalizations of the Prouhet-Thue-Morse sequence. We also state a simple expression for the value of this sum when $m=n$ (which is non-zero in general).

Finally, in the last section we propose several conjectures generalizing the identities obtained in previous sections to the realm of polynomials in several variables.

\section{First results}\label{Section2}

We start by defining a class of sequences for which an analogue of Prouhet's result holds.

Let $k\geq 2$ be an integer. By far the most important case will be when $k=2$, but we will state most of the results for a general $k$. Let $K$ be a field and let $V$ be a finitely dimensional vector space over $K$. We consider sequences $\mathbf{u}(n)$, $n\geq 0$ which take values in $V$ and satisfy a recurrence relation
\begin{equation}\label{czajnik}
\mathbf{u}(ki+j)=P(\mathbf{u}(i))+jq\quad\mbox{for}\quad i\geq 0,  j=0,1,\ldots,k-1 \quad \text{and} \quad \mathbf{u}(0)=0,
\end{equation}
where $q\in V$ and $P\colon V\to V$ is a linear map. We will be mostly interested in the case when $V=K$, but sometimes (in particular in Corollarries \ref{cor2} and \ref{cor3}) we will use the more general setup. We have already noted in the introduction that the sequence $\mathbf{u}(n)=s_{2}(n)$ satisfies such a recurrence with $k=2$, $V=K$, $P=\id$, $q=1$ and the sequence $\mathbf{u}(n)=n$ satisfies such a recurrence with $V=K$, $q=1$ and $P=$ multiplication by $k$.

The assumption that $\mathbf{u}(0)=0$ is not particularly restrictive. In fact, for any sequence $\mathbf{u}$ satisfying the recurrence in ($\ref{czajnik}$), substituting $i=j=0$ shows that $\mathbf{u}(0)$ is a fixed point of $P$, and so after subtracting a fixed $P$-invariant vector, we can always get $\mathbf{u}(0)=0$. In what follows, we always assume that $\mathbf{u}(0)=0$. We will show that all the identities stated in the introduction generalize to the more general context of sequences satisfying recurrence (\ref{czajnik}).

\begin{rem}
{\rm If sequences $\mathbf{u}_1,\mathbf{u}_2,\ldots,\mathbf{u}_{\nu}$ satisfy recurrences of the form (\ref{czajnik}) with $V_i$, $P_i\colon V_i \to V_i$ and $q_i\in V_i$, then the sequence $$\mathbf{u}=(\mathbf{u}_1,\mathbf{u}_2,\ldots,\mathbf{u}_{\nu})$$ satisfies such a recurrence with $V=V_1\oplus\ldots\oplus V_{\nu}$, $P=P_1\oplus\ldots\oplus P_{\nu}$, and $q=(q_1,\ldots,q_{\nu})$. }
\end{rem}

We are ready to introduce the family of polynomials which will be the main object of our study. Let $k\geq 2$ and $\zeta_{k}\neq 1$ be a (not necessarily primitive) $k$-th root of unity (we assume $\zeta_k \in K$). For integers $m,n\geq 0$, consider the polynomials
\begin{equation}\label{defpol}
f_{m,n}^{\mathbf u}(x)=\sum_{i=0}^{k^{n}-1}\zeta_{k}^{s_{k}(i)}(x+{\mathbf u}(i))^{m}.
\end{equation}
The polynomial $f_{m,n}^{\mathbf u}$ is a polynomial in $x$ with values in the symmetric algebra $\mathrm{Sym}(V)$ of $V$. If we choose a basis $v_1,\ldots,v_{\nu}$ of $V$, we can regard $f_{m,n}^{\mathbf u}$ as a polynomial in $x$ whose coefficients are homogenous polynomials in $v_1,\ldots,v_{\nu}$.  The polynomials considered in the introduction are all special cases of $f_{m,n}^{\mathbf u}$ (for $\mathbf{u}(n)=n$ and $\mathbf{u}(n)=s_2(n)$). For future use, note that the endomorphism $P\colon V\to V$ induces an endomorphism of $\mathrm{Sym}(V)$, which we will denote by the same letter. In what follows, we always assume that the sequence $\mathbf{u}$ satisfies the recurrence (\ref{czajnik}).

%In the sequel we will also consider the sequence $\{F_{m}(n)\}_{n\in\N}$, where $F_{m}(n)=F_{m}(n,0)$ is the constant value of the polynomial $F_{m}(n,x)$. From

We start with the following simple lemma which allows us to reduce the problem of computing the polynomial $f_{m,n}^{\mathbf u}$ to the problem of computing the elements
$$F_{m,n}^\mathbf{u}=f_{m,n}^{\mathbf u}(0)=\sum_{i=0}^{k^{n}-1}\zeta_{k}^{s_{k}(i)}{\mathbf u}(i)^{m}.$$ To simplify notation, we will often write $F_{m,n}$ for $F_{m,n}^\mathbf{u}$ and $f_{m,n}(x)$ for $f_{m,n}^{\mathbf u}(x)$.

\begin{lem}\label{lem1}
For $m,n\geq 0$, the following equality holds:
\begin{equation*}
f_{m+1,n}(x)=(m+1)\int_{0}^{x}f_{m,n}(t)dt+F_{m+1,n}.
\end{equation*}
\end{lem}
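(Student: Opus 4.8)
The plan is to prove the identity by a direct computation, differentiating both sides with respect to $x$ and then matching constant terms.

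First I would observe that the right-hand side, call it $R(x) = (m+1)\int_0^x f_{m,n}(t)\,dt + F_{m+1,n}$, is manifestly a polynomial in $x$ (with coefficients in $\mathrm{Sym}(V)$), since $f_{m,n}(t)$ is a polynomial of degree $m$ in $t$, so its antiderivative from $0$ to $x$ is a polynomial of degree $m+1$ with zero constant term. Thus $R(0) = F_{m+1,n}$, which matches $f_{m+1,n}(0) = F_{m+1,n}$ by definition. It therefore suffices to check that $f_{m+1,n}'(x) = R'(x)$ as polynomials, since two polynomials with the same derivative and the same value at $0$ are equal.

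Next I would compute both derivatives. On the left, differentiating term by term,
\begin{equation*}
\frac{d}{dx} f_{m+1,n}(x) = \sum_{i=0}^{k^n-1} \zeta_k^{s_k(i)} (m+1)(x + \mathbf{u}(i))^m = (m+1) f_{m,n}(x).
\end{equation*}
On the right, by the fundamental theorem of calculus, $R'(x) = (m+1) f_{m,n}(x)$ as well (the constant $F_{m+1,n}$ differentiates to zero). Hence $f_{m+1,n}'(x) = R'(x)$, and combined with the agreement of constant terms this gives $f_{m+1,n}(x) = R(x)$, as desired.

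I do not anticipate a genuine obstacle here; this is essentially a formal manipulation. The only points requiring a word of care are: (i) making sure the term-by-term differentiation and integration are legitimate, which is immediate because all sums are finite; and (ii) noting that the argument works over the coefficient ring $\mathrm{Sym}(V)$ rather than just over a field — but since we are dealing with polynomial identities in $x$ with coefficients in a commutative ring, and the operations $\frac{d}{dx}$ and $\int_0^x(\cdot)\,dt$ on polynomials are purely formal (defined coefficient-wise, with $\int_0^x t^j\,dt = x^{j+1}/(j+1)$), everything goes through provided the relevant integers are invertible; in characteristic zero, or more generally whenever the binomial-type coefficients appearing are units, there is no issue. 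If one prefers to avoid any division, one can instead expand $(x+\mathbf{u}(i))^{m+1}$ by the binomial theorem, compare coefficients of each power $x^\ell$ on both sides, and verify the resulting elementary identity $\binom{m+1}{\ell} = \frac{m+1}{\ell}\binom{m}{\ell-1}$ for $\ell \geq 1$ together with the constant-term check; this is the route I would take if characteristic or integrality were a concern.
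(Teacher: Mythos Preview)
Your argument is correct and is essentially the same as the paper's: the paper integrates $f_{m,n}(t)$ term by term to obtain $\int_0^x f_{m,n}(t)\,dt=(f_{m+1,n}(x)-F_{m+1,n})/(m+1)$ directly, while you verify the equivalent statement by differentiating and matching the constant term. Both rest on the single observation that $\frac{d}{dx}(x+\mathbf{u}(i))^{m+1}=(m+1)(x+\mathbf{u}(i))^{m}$, so there is no substantive difference.
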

\begin{proof}
We have
$$
\int_{0}^{x}f_{m,n}(t)dt=\sum_{i=0}^{k^{n}-1}\zeta_{k}^{s_{k}(i)}\frac{(t+\mathbf{u}(i))^{m+1}}{m+1}\Big|_{0}^{x}=
                        \frac{f_{m+1,n}(x)-F_{m+1,n}}{m+1}.
\qedhere
$$\end{proof}

The values of $F_{m,n}$ for $\mathbf{u}(n)=s_2(n)$ can be computed directly. Denote by $ {m \brace n}$ the Stirling number of the second kind, i.e., the number of partitions of a set with $m$ elements into $n$ nonempty subsets.

\begin{prop} Let $\mathbf{u}(n)=s_2(n)$. Then $F_{m,n}=(-1)^n n! {m \brace n}$.
\end{prop}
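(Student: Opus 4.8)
The plan is to compute $F_{m,n} = \sum_{i=0}^{2^n-1}(-1)^{s_2(i)}s_2(i)^m$ directly by exploiting the product structure of the set $\{0,1,\dots,2^n-1\}$: writing $i$ in binary as $(\eps_1,\dots,\eps_n) \in \{0,1\}^n$, we have $s_2(i) = \eps_1 + \dots + \eps_n$ and $(-1)^{s_2(i)} = \prod_{j=1}^n (-1)^{\eps_j}$. The natural tool is an exponential generating function. First I would consider $\sum_{m\geq 0} F_{m,n}\frac{X^m}{m!} = \sum_{i=0}^{2^n-1}(-1)^{s_2(i)}e^{s_2(i)X}$, and factor this over the $n$ binary digits: since both $(-1)^{s_2(i)}$ and $e^{s_2(i)X}$ are multiplicative over the digit decomposition, the sum factors as $\prod_{j=1}^n\bigl(\sum_{\eps=0}^1 (-1)^{\eps}e^{\eps X}\bigr) = (1 - e^X)^n$.

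Next I would extract coefficients. We have $1 - e^X = -(e^X - 1) = -\sum_{r\geq 1}\frac{X^r}{r!}$, so $(1-e^X)^n = (-1)^n(e^X-1)^n$. The classical identity $(e^X-1)^n = n!\sum_{m\geq n}{m \brace n}\frac{X^m}{m!}$ (the standard EGF for Stirling numbers of the second kind) then gives $\sum_{m\geq 0}F_{m,n}\frac{X^m}{m!} = (-1)^n n!\sum_{m\geq n}{m\brace n}\frac{X^m}{m!}$, and comparing coefficients of $X^m/m!$ yields $F_{m,n} = (-1)^n n!{m\brace n}$, which is exactly the claim (and incidentally recovers $F_{m,n}=0$ for $m<n$, consistent with the vanishing result quoted in the introduction). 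If one prefers to avoid invoking the Stirling EGF as a black box, the same conclusion follows by expanding $(e^X-1)^n$ via the binomial theorem as $\sum_{\ell=0}^n\binom{n}{\ell}(-1)^{n-\ell}e^{\ell X}$ and using the finite-difference formula ${m\brace n} = \frac{1}{n!}\sum_{\ell=0}^n(-1)^{n-\ell}\binom{n}{\ell}\ell^m$.

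The only mild subtlety — and the step I would state carefully — is the justification that the generating function identity may be read off coefficientwise. Since everything is a formal power series in $X$ (or, if one likes, since the sums are finite for each fixed $m$ and the convergence is absolute), this is immediate; there is no analytic obstacle. So the proof is essentially a one-line generating-function computation, and the "hard part" is merely recognizing the right bookkeeping: treat the digit expansion multiplicatively and pass to the exponential generating function in the exponent $m$.
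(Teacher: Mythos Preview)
Your argument is correct. The paper's proof is more direct: it groups the sum by the value $s=s_2(i)$, observes that there are $\binom{n}{s}$ integers $0\le i\le 2^n-1$ with $s_2(i)=s$, and thus obtains $F_{m,n}=\sum_{s=0}^{n}(-1)^{s}\binom{n}{s}s^{m}$, which is $(-1)^n n!{m\brace n}$ by the standard finite-difference formula for Stirling numbers. Your route packages the same combinatorics through the exponential generating function in $m$: the digit-wise factorisation yielding $(1-e^X)^n$ is exactly the EGF translation of the counting step, and your ``alternative'' (expanding $(e^X-1)^n$ binomially) literally reproduces the paper's sum. So the two proofs differ only in bookkeeping; your version has the minor advantage of making the product structure over the $n$ binary digits explicit, while the paper's is a line shorter since it avoids passing through a generating function.
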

\begin{proof}
We have $F_{m,n}=\sum_{i=0}^{2^{n}-1}(-1)^{s_{2}(i)}s_2(i)^m$. Among integers $0\leq i \leq 2^n-1$ there are exactly $\binom n s$ many with $s_2(i)=s$. Thus we get $F_{m,n}=\sum_{s=0}^{n}(-1)^{s}\binom n s s^m$. This is equal to $(-1)^n n! {m \brace n}$ by a well-known formula.
\end{proof}

Note that in this case we have $F_{m,n}=0$ for $m<n$. In fact, this is a general phenomenon.

\begin{prop}\label{lem2}
Let $m<n$ be nonnegative integers. Then $f_{m,n}(x)=0$.
\end{prop}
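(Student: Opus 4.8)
The plan is to prove the statement by induction on $n$, carried out simultaneously over all admissible data: a finite-dimensional vector space $V$, a linear map $P\colon V\to V$, an element $q\in V$, and a sequence $\mathbf u$ satisfying (\ref{czajnik}) for these data. For $n=0$ there is nothing to prove, since there is no nonnegative integer $m<0$. For the inductive step, fix $n\ge 1$ and $m<n$, write each index $0\le i\le k^{n}-1$ uniquely as $i=ki'+j$ with $0\le i'\le k^{n-1}-1$ and $0\le j\le k-1$, and use that appending the digit $j$ gives $s_k(ki'+j)=s_k(i')+j$, together with the recurrence $\mathbf u(ki'+j)=P(\mathbf u(i'))+jq$ from (\ref{czajnik}). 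Expanding $\bigl(x+P(\mathbf u(i'))+jq\bigr)^{m}$ by the binomial theorem and reorganising the summation, one obtains
\[
f_{m,n}^{\mathbf u}(x)=\sum_{l=0}^{m}\binom{m}{l}\,q^{l}\Bigl(\sum_{j=0}^{k-1}\zeta_{k}^{j}j^{l}\Bigr)\sum_{i'=0}^{k^{n-1}-1}\zeta_{k}^{s_{k}(i')}\bigl(x+P(\mathbf u(i'))\bigr)^{m-l}.
\]

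The key observation is that the inner sum over $i'$ equals $f_{m-l,\,n-1}^{\widetilde{\mathbf u}}(x)$, where $\widetilde{\mathbf u}:=P\circ\mathbf u$. Applying $P$ to (\ref{czajnik}) shows that $\widetilde{\mathbf u}$ again satisfies a recurrence of type (\ref{czajnik}), now with data $(V,P,P(q))$, and $\widetilde{\mathbf u}(0)=P(0)=0$; hence the induction hypothesis applies at level $n-1$ to $\widetilde{\mathbf u}$ and yields $f_{m-l,n-1}^{\widetilde{\mathbf u}}(x)=0$ whenever $m-l<n-1$. Consequently only terms with $l\le m-n+1$ can contribute, and since $m\le n-1$ this forces $l=0$ and $m=n-1$. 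If $m<n-1$, the whole right-hand side vanishes. If $m=n-1$, the only surviving term is $\bigl(\sum_{j=0}^{k-1}\zeta_{k}^{j}\bigr)f_{n-1,n-1}^{\widetilde{\mathbf u}}(x)$, which is zero because $\sum_{j=0}^{k-1}\zeta_{k}^{j}=(\zeta_{k}^{k}-1)/(\zeta_{k}-1)=0$, as $\zeta_{k}\ne 1$ is a $k$-th root of unity. This closes the induction.

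The computations are routine binomial bookkeeping, so I expect no real obstacle. The one point that needs slight care is organising the induction correctly: it must be run over all sequences satisfying (\ref{czajnik}) at once, so that the hypothesis can be invoked for the auxiliary sequence $\widetilde{\mathbf u}=P\circ\mathbf u$, which in general differs from $\mathbf u$. Alternatively, one could first use Lemma \ref{lem1} to reduce the vanishing of $f_{m,n}^{\mathbf u}(x)$ to the vanishing of its constant term $F_{m,n}^{\mathbf u}$ (starting from $f_{0,n}^{\mathbf u}(x)=F_{0,n}^{\mathbf u}=\sum_{j=0}^{k-1}\zeta_k^j$ raised to a power, which is $0$ for $n\ge 1$), and then run the same recurrence argument on the numbers $F_{m,n}^{\mathbf u}$ directly; the two routes are essentially identical.
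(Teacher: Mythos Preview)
Your proof is correct and follows essentially the same inductive scheme as the paper: decompose $i=ki'+j$, use $s_k(ki'+j)=s_k(i')+j$ and the recurrence~(\ref{czajnik}), expand binomially, apply the induction hypothesis, and kill the lone surviving $l=0$ term with $\sum_{j=0}^{k-1}\zeta_k^j=0$.

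The one organisational difference is how you treat the inner sum $\sum_{i'}\zeta_k^{s_k(i')}\bigl(x+P(\mathbf u(i'))\bigr)^{m-l}$. You interpret it as $f_{m-l,n-1}^{\widetilde{\mathbf u}}(x)$ for the auxiliary sequence $\widetilde{\mathbf u}=P\circ\mathbf u$, which is why you need to run the induction simultaneously over all admissible sequences. The paper instead first reduces to the constant terms $F_{m,n}$ and then uses that $P$ extends to a ring endomorphism of $\mathrm{Sym}(V)$, so that $\sum_{i}\zeta_k^{s_k(i)}P(\mathbf u(i))^{m-l}=P(F_{m-l,n})$ for the \emph{same} sequence $\mathbf u$; no quantification over sequences is needed. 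Both devices are equivalent here, and your alternative paragraph about reducing to $F_{m,n}$ first is exactly the paper's route.
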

\begin{proof}
It is sufficient to prove that $F_{m,n}=0$. Indeed, once we know that $F_{m,n}=0$ for $m<n$, we see that
\begin{align*}
f_{m,n}(x)&=\sum_{i=0}^{k^{n}-1}\zeta_{k}^{s_{k}(i)}(x+\mathbf{u}(i))^{m}=\sum_{i=0}^{k^{n}-1}\zeta_{k}^{s_{k}(i)}\sum_{p=0}^{m}{m\choose p}\mathbf{u}(i)^{p}x^{m-p}\\
           &=\sum_{p=0}^{m}{m\choose p}x^{m-p}\sum_{i=0}^{k^{n}-1}\zeta_{k}^{s_{k}(i)}\mathbf{u}(i)^{p}=\sum_{p=0}^{m}{m\choose p}F_{p,n} x^{m-p}=0
\end{align*}
and the result follows.

We prove that $F_{m,n}=0$ for $m<n$ by induction on $n$. This is vacuously true when $n=0$. Assume the claim is true for a given $n$ and consider $F_{m,n+1}$ with $m<n+1$. We have the following chain of equalities
\begin{align*}
F_{m,n+1}&=\sum_{i=0}^{k^{n+1}-1}\zeta_{k}^{s_{k}(i)}\mathbf{u}(i)^{m}=\sum_{j=0}^{k-1}\sum_{i=0}^{k^{n}-1}\zeta_{k}^{s_{k}(ki+j)}\mathbf{u}(ki+j)^{m}\\
            &=\sum_{j=0}^{k-1}\zeta_{k}^{j}\sum_{i=0}^{k^{n}-1}\zeta_{k}^{s_{k}(i)}(P(\mathbf{u}(i))+j q)^{m}\\
            &=\sum_{j=0}^{k-1}\zeta_{k}^{j}\sum_{i=0}^{k^{n}-1}\zeta_{k}^{s_{k}(i)}\sum_{l=0}^{m}{m\choose l}j^{l}q^{l}P(\mathbf{u}(i))^{m-l}\\
            &=\sum_{j=0}^{k-1}\zeta_{k}^{j}\sum_{l=0}^{m}{m\choose l}j^{l}q^{l}\sum_{i=0}^{k^{n}-1}\zeta_{k}^{s_{k}(i)}P(\mathbf{u}(i))^{m-l}\\
            &=\sum_{j=0}^{k-1}\zeta_{k}^{j}\sum_{l=0}^{m}{m\choose l}j^{l}q^{l}P(F_{m-l,n}).
\end{align*}
Observe now that by the induction hypothesis the terms with $m-l<n$ vanish. Hence the sum  consists only of the terms with $l=0$. Thus we get
\begin{equation*}
F_{m,n+1}=\sum_{j=0}^{k-1}\zeta_{k}^{j}P(F_{m,n})=0
\end{equation*}
since $\sum_{j=0}^{k-1}\zeta_{k}^{j}=0$.
\end{proof}
Because of the identity $f_{m,n}=0$ for $m<n$ and Lemma \ref{lem1}, we are interested solely in the computation of $F_{m,n}$ for $m\geq n$

To simplify the formulation of the next result, let us introduce the sequence $$a_n=\sum_{j=0}^{k-1} j^n \zeta_k^j.$$
 Note the following easy special cases. For $k=2$, we have $a_0=0$ and $a_n=-1$ for $n\geq 1$.
 For a general $k$, we have  $a_0=0$, $a_1=\frac{k}{\zeta_k-1}$, and $$a_2=k((k-2)\zeta_k-k)/(\zeta_k-1)^2.$$  %\emph{Tu b\c{e}dzie  mo\.ze wi\c{e}cej o ci\c{a}gu $a_n$, co najmniej powy\.zsze wyliczenie i mo\.ze wyznaczenie} $$\sum_{n\geq 0} \frac{a_n}{n!}x^n=\frac{e^{kx}-1}{\zeta_k e^x-1}.$$

We now state a recurrence relation for the sequence $F_{m,n}$.

\begin{thm}\label{thm1}
The following recurrence holds
\begin{equation}\label{recG}
F_{m,n}=\sum_{r=1}^{m-n+1}a_r{m\choose r}q^{r}P(F_{m-r,n-1}), \quad n\geq 1, m\geq 0.
\end{equation}
\end{thm}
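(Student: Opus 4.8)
The plan is to mimic the computation already carried out in the proof of Proposition~\ref{lem2}, but this time keeping track of all the terms rather than discarding them. Starting from the definition, I would split the sum over $0\le i\le k^{n}-1$ according to the residue $j$ of $i$ modulo $k$, writing $i=k i'+j$ with $0\le i'\le k^{n-1}-1$ and $j=0,\ldots,k-1$. Using $s_k(ki'+j)=s_k(i')+j$ and the recurrence $\mathbf{u}(ki'+j)=P(\mathbf{u}(i'))+jq$, this gives
\begin{equation*}
F_{m,n}=\sum_{j=0}^{k-1}\zeta_k^{j}\sum_{i'=0}^{k^{n-1}-1}\zeta_k^{s_k(i')}\bigl(P(\mathbf{u}(i'))+jq\bigr)^{m}.
\end{equation*}
Expanding the $m$-th power by the binomial theorem (this is legitimate in $\mathrm{Sym}(V)$ since the two summands $P(\mathbf{u}(i'))$ and $jq$ commute there), interchanging the order of summation, and recognising the inner sum as $P(F_{m-r,n-1})$ after pulling the endomorphism $P$ through the finite sum, I obtain
\begin{equation*}
F_{m,n}=\sum_{r=0}^{m}\binom{m}{r}q^{r}P(F_{m-r,n-1})\sum_{j=0}^{k-1}j^{r}\zeta_k^{j}=\sum_{r=0}^{m}a_r\binom{m}{r}q^{r}P(F_{m-r,n-1}).
\end{equation*}

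The remaining point is to cut down the range of $r$ to $1\le r\le m-n+1$. The lower endpoint is immediate: $a_0=\sum_{j=0}^{k-1}\zeta_k^{j}=0$ because $\zeta_k\ne 1$ is a $k$-th root of unity, so the $r=0$ term drops out. For the upper endpoint, I invoke Proposition~\ref{lem2}: the factor $F_{m-r,n-1}$ vanishes whenever $m-r<n-1$, i.e.\ whenever $r>m-n+1$; applying $P$ and multiplying by scalars keeps it zero. Hence only the terms with $1\le r\le m-n+1$ survive, which is exactly~(\ref{recG}). (Note that when $m<n-1$ the asserted range of summation is empty, consistent with $F_{m,n}=0$ from Proposition~\ref{lem2}; when $m=n-1$ the single term $r=m-n+1=0$ would be forced, but it vanishes because $a_0=0$, again consistent.)

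There is essentially no serious obstacle here; the argument is a bookkeeping variant of the proof of Proposition~\ref{lem2}. The one subtlety worth stating carefully is that all manipulations take place in the commutative ring $\mathrm{Sym}(V)$ and that $P$, viewed as the induced endomorphism of $\mathrm{Sym}(V)$, is additive and commutes with multiplication by the scalars $a_r\binom{m}{r}q^{r}$ — but $q\in V\subset\mathrm{Sym}(V)$ need not be $P$-invariant, so one must be careful to write $P(\mathbf{u}(i'))$ and not $P$ applied to the whole expression. Once the binomial expansion is set up with the correct placement of $P$, the identification of the inner sum with $P(F_{m-r,n-1})$ and the two endpoint reductions complete the proof.
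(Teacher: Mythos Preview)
Your proposal is correct and follows essentially the same argument as the paper: split the sum according to the residue modulo $k$, apply the recurrence for $\mathbf{u}$ and the binomial expansion, and then use $a_0=0$ together with Proposition~\ref{lem2} to trim the range of $r$. The only cosmetic difference is that the paper computes $F_{m,n+1}$ in terms of $F_{\cdot,n}$ and then shifts $n\mapsto n-1$ at the end, whereas you work with $F_{m,n}$ and $F_{\cdot,n-1}$ directly; one small wording slip is that $q^{r}$ is not a scalar, so $P$ does not literally ``commute with multiplication by $a_r\binom{m}{r}q^{r}$'' --- but your actual manipulation (keeping $q^{r}$ outside and applying $P$ only to $\mathbf{u}(i')^{m-r}$) is exactly right.
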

\begin{proof}
Since $u(ki+j)=P(u(i))+jq$, we have
\begin{align*}
F_{m,n+1}&=\sum_{i=0}^{k^{n+1}-1}\zeta_{k}^{s_{k}(i)}{\mathbf u}(i)^{m}\\&\;=\sum_{j=0}^{k-1}\sum_{i=0}^{k^{n}-1}\zeta_{k}^{s_{k}(ki+j)}(P({\mathbf u}(i))+jq)^{m}\\
              &\;=\sum_{j=0}^{k-1}\zeta_{k}^{j}\sum_{i=0}^{k^{n}-1}\zeta_{k}^{s_{k}(i)}\sum_{r=0}^{m}{m\choose r}j^{r}q^{r}P({\mathbf u}(i))^{m-r}\\
              &\;=\sum_{r=0}^{m}a_{r}{m\choose r}q^rP(F_{m-r,n}).
              \end{align*}
The term with $r=0$ vanishes since $a_0=0$; the terms with $r\geq m-n+1$ vanish since by Proposition \ref{lem2} we have $F_{m-r,n}=0$. Thus
\begin{align*}
F_{m,n+1}&=\sum_{r=1}^{m-n} a_{r}{m\choose r}q^{r}P(F_{m-r,n}).
\end{align*}
To conclude, replace $n$ by $n-1$.
\end{proof}

\begin{prop}\label{divisibility}
In the ring $\mathrm{Sym}(V)$, the elements $F_{m,n}$ are divisible by $\prod_{j=0}^{n-1} P^{j}(q)$.\end{prop}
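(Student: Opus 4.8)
The plan is to prove the divisibility by induction on $n$, using the recurrence from Theorem \ref{thm1}. For $n=0$ the statement is vacuous since the empty product equals $1$ and everything is divisible by $1$. Actually, it is cleaner to start the induction at $n=1$: here Theorem \ref{thm1} gives $F_{m,1}=\sum_{r=1}^{m}a_r\binom{m}{r}q^r P(F_{m-r,0})$, and since $F_{m-r,0}=f_{m-r,0}(0)=\mathbf{u}(0)^{m-r}$ equals $1$ when $r=m$ and $0$ otherwise (as $\mathbf{u}(0)=0$), we get $F_{m,1}=a_m q^m P(1)=a_m q^m$, which is plainly divisible by $q=\prod_{j=0}^{0}P^j(q)$. (One should double-check the edge case $m=0$: then $F_{0,1}=\sum_{i=0}^{k-1}\zeta_k^i=0$, consistent with Proposition \ref{lem2}.)

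For the inductive step, assume the claim holds for $n$, i.e. $\prod_{j=0}^{n-1}P^j(q)$ divides $F_{m',n}$ for every $m'$. By Theorem \ref{thm1},
\begin{equation*}
F_{m,n+1}=\sum_{r=1}^{m-n}a_r\binom{m}{r}q^r P(F_{m-r,n}).
\end{equation*}
By the inductive hypothesis we may write $F_{m-r,n}=\bigl(\prod_{j=0}^{n-1}P^j(q)\bigr)\cdot G_{m-r}$ for some $G_{m-r}\in\mathrm{Sym}(V)$. Applying the ring endomorphism $P$ and using that $P$ is multiplicative on $\mathrm{Sym}(V)$, we obtain $P(F_{m-r,n})=\bigl(\prod_{j=0}^{n-1}P^{j+1}(q)\bigr)\cdot P(G_{m-r})=\bigl(\prod_{j=1}^{n}P^j(q)\bigr)\cdot P(G_{m-r})$. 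Hence each summand of $F_{m,n+1}$ is divisible by $q\cdot\prod_{j=1}^{n}P^j(q)=\prod_{j=0}^{n}P^j(q)$, and therefore so is the sum. This is exactly the claim for $n+1$, completing the induction.

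The only genuinely delicate point is the bookkeeping of which power of $q$ gets pulled out and the fact that the extra factor $q^r$ (with $r\geq 1$) appearing in each term of the recurrence supplies precisely the missing $j=0$ factor $P^0(q)=q$ after $P$ has shifted the inductive product from $\prod_{j=0}^{n-1}P^j(q)$ to $\prod_{j=1}^{n}P^j(q)$. Everything else — commutativity of $\mathrm{Sym}(V)$, multiplicativity of the induced endomorphism $P$, and the vanishing $F_{m-r,n}=0$ for $r>m-n$ guaranteed by Proposition \ref{lem2} which justifies truncating the sum at $r=m-n$ — is routine. I expect no serious obstacle; the argument is essentially a clean induction once the base case $n=1$ and the role of the factor $q^r$ are correctly identified.
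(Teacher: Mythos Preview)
Your proof is correct and follows exactly the same approach as the paper: induction on $n$ using the recurrence of Theorem~\ref{thm1}, noting that every term carries a factor $q^r$ with $r\geq 1$ while $P$ shifts the inductive product $\prod_{j=0}^{n-1}P^j(q)$ to $\prod_{j=1}^{n}P^j(q)$. The paper's proof is a one-line compression of your argument; your explicit $n=1$ computation is a harmless sanity check but not needed, since the $n=0$ base case already suffices.
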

\begin{proof} The proof is by induction on $n$. There is nothing to prove for $n=0$. The claim for a general $n$ follows from the recurrence in Theorem \ref{thm1}. In fact, $\prod_{j=0}^{n-1} P^{j}(q)$ divides all the terms in this recurrence.\end{proof}

In order to study the sequence $F_{m,n}$ further, we introduce a related, simpler sequence  $H_{m,n}$ of elements of $\mathrm{Sym}(V)$ given by

\begin{equation}\label{defH}
H_{m,n}=\frac{(\zeta_{k}-1)^{n}}{k^{n}(n+m)!\prod_{i=0}^{n-1}P^{i}(q)}F_{m+n,n}.
\end{equation}
This definition makes sense, since by Proposition \ref{divisibility} we know that $F_{m,n}$ is divisible by $\prod_{i=0}^{n-1} P^{i}(q)$ and the ring $\mathrm{Sym}(V)$ is a domain. Note that in fact $H_{m,n}$ lie in $ \mathrm{Sym}^{m}(V)$, the $m$th symmetric power of $V$.

We start by rewriting the recurrence relation for $F_{m,n}$ in terms of $H_{m,n}$.

\begin{lem}\label{lem3}
The  sequence $H_{m,n}$ satisfies the following recurrence relation
\begin{equation}\label{recH}
H_{m,n}=P(H_{m,n-1})+\sum_{r=2}^{m+1}\frac{a_{r}(\zeta_{k}-1)}{k r!}q^{r-1} P(H_{m+1-r,n-1}), \quad n\geq 1.
\end{equation}
\end{lem}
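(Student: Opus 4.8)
The plan is to derive the recurrence for $H_{m,n}$ directly from the recurrence for $F_{m,n}$ in Theorem \ref{thm1} by substituting the definition (\ref{defH}). First I would start from Theorem \ref{thm1} with the index $m$ replaced by $m+n$ and $n$ replaced by $n$, so that the recurrence reads
\begin{equation*}
F_{m+n,n}=\sum_{r=1}^{m+1}a_r\binom{m+n}{r}q^{r}P(F_{m+n-r,n-1}),\quad n\geq 1.
\end{equation*}
Note that $F_{m+n-r,n-1}=F_{(m+1-r)+(n-1),n-1}$, which is precisely the shape appearing in the definition of $H_{m+1-r,n-1}$; this is the point of the particular way $H$ is indexed. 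So I would solve (\ref{defH}) for $F_{m+n-r,n-1}$, namely
\begin{equation*}
F_{(m+1-r)+(n-1),n-1}=\frac{k^{n-1}(m+n)!\prod_{i=0}^{n-2}P^{i}(q)}{(\zeta_k-1)^{n-1}}\,H_{m+1-r,n-1},
\end{equation*}
using that $(n-1)+(m+1-r)+(n-1)=m+n-r$ gives the factorial $(m+n-r)!$ — wait, I need to be careful here: the factorial in the denominator of $H_{m+1-r,n-1}$ is $((n-1)+(m+1-r))!=(m+n-r)!$, not $(m+n)!$. I would record this carefully and substitute into the recurrence for $F_{m+n,n}$.

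Next I would also substitute the left-hand side: $F_{m+n,n}=\dfrac{k^{n}(n+m)!\prod_{i=0}^{n-1}P^{i}(q)}{(\zeta_k-1)^{n}}H_{m,n}$. After substituting both sides and pulling $P$ through the product $\prod_{i=0}^{n-2}P^{i}(q)$ (using that $P$ is an algebra endomorphism of $\mathrm{Sym}(V)$, so $P\big(\prod_{i=0}^{n-2}P^i(q)\big)=\prod_{i=1}^{n-1}P^i(q)$, and hence $q\cdot P\big(\prod_{i=0}^{n-2}P^i(q)\big)=\prod_{i=0}^{n-1}P^i(q)$), the common factor $\prod_{i=0}^{n-1}P^i(q)$ will cancel from every term. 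The powers of $k$ and $(\zeta_k-1)$ will leave behind exactly one factor of $k^{-1}(\zeta_k-1)$ in each summand on the right, and the ratio of factorials $\binom{m+n}{r}(m+n-r)!/(m+n)!=1/r!$ will produce the $1/r!$ appearing in (\ref{recH}). I would handle the $r=1$ term of the sum separately: since $a_1=k/(\zeta_k-1)$, after the cancellations the coefficient of the $r=1$ term becomes exactly $1$, producing the leading term $P(H_{m,n-1})$; the remaining terms $r=2,\dots,m+1$ assemble into the stated sum with coefficient $a_r(\zeta_k-1)/(k\,r!)\,q^{r-1}$.

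The main obstacle will be bookkeeping rather than any conceptual difficulty: I must track the exponents of $q$ (the defining product for $H_{m+1-r,n-1}$ contributes $\prod_{i=0}^{n-2}P^i(q)$ while the one for $H_{m,n}$ contributes $\prod_{i=0}^{n-1}P^i(q)$, and the recurrence itself contributes $q^r$, so one power of $q$ is consumed matching the products and $q^{r-1}$ survives), the exponents of $k$ and $(\zeta_k-1)$ (one of each survives per term), and the factorials (the identity $\binom{m+n}{r}/(m+n)!=1/(r!\,(m+n-r)!)$ is the crux). I would also note that the upper limit of summation drops from $m+1$ (in the $F$-recurrence with argument $m+n$) to $m+1$ still, but that the $r=1$ term splits off and the remaining range is $2\le r\le m+1$, matching (\ref{recH}); and since $P$ commutes with the scalar coefficients (they lie in $K$) there is no difficulty moving $P$ outside. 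Putting these substitutions together and simplifying gives (\ref{recH}).
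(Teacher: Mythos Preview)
Your proposal is correct and follows exactly the same approach as the paper, which simply says the recurrence follows immediately from Theorem \ref{thm1} by substituting the formula for $H_{m,n}$ and separating the $r=1$ term. Your write-up spells out the bookkeeping (the cancellation of $\prod_{i=0}^{n-1}P^i(q)$ via $q\cdot P\big(\prod_{i=0}^{n-2}P^i(q)\big)=\prod_{i=0}^{n-1}P^i(q)$, the factorial identity $\binom{m+n}{r}(m+n-r)!/(m+n)!=1/r!$, and the use of $a_1=k/(\zeta_k-1)$ to make the $r=1$ coefficient equal to $1$) that the paper leaves implicit.
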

\begin{proof}
We obtain this recurrence immediately from Theorem \ref{thm1} by substituting the formula for $H_{m,n}$ and separating the term with $r=1$. %Using the recurrence relation (\ref{recG}) which is satisfied by $G_{m,n}$ and the expression of $G_{m,n}$ in terms of $H_{m,n}$ we perform all necessary calculations and simplifications. Then we change the order of summations and get the statement of the lemma. We omit the simple details.
\end{proof}

We will use the following easy lemma providing a solution to a "twisted" linear recurrence relation of order 1.
\begin{lem}\label{sollinrec}
Suppose that a sequence $r_{n}$ of elements of a ring $R$ satisfies a recurrence relation
$$r_{n}=P(r_{n-1})+b_{n}, \quad n\geq1$$
for some $b_{n} \in R$ and a ring endomorphism $P\colon R\to R$. Then
\begin{equation*}
r_{n}=P^n(r_{0})+\sum_{i=1}^{n}P^{n-i}(b_{i}),
\end{equation*}
where $P^{i}=P\circ P \circ \ldots \circ P$ denotes the $i$-th iteration of $P$.
\end{lem}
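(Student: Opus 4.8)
The plan is to prove the closed form by induction on $n$, which is the natural approach for a first‑order recurrence. For the base case $n=0$ both sides collapse to $r_0$: the iterate $P^0$ is the identity map and the sum $\sum_{i=1}^{0}P^{-i}(b_i)$ is empty. (If one prefers to start at $n=1$, the recurrence itself reads $r_1=P(r_0)+b_1$, which matches $P^1(r_0)+P^{0}(b_1)$.)

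For the inductive step, assume the formula holds for $n-1$, that is, $r_{n-1}=P^{n-1}(r_0)+\sum_{i=1}^{n-1}P^{n-1-i}(b_i)$. Applying the recurrence $r_n=P(r_{n-1})+b_n$ and then pushing $P$ through the sum termwise — here we use only that $P$ is additive, which is a part of being a ring endomorphism — we obtain
\[
r_n=P(r_{n-1})+b_n=P^{n}(r_0)+\sum_{i=1}^{n-1}P^{n-i}(b_i)+b_n.
\]
Writing the trailing term as $b_n=P^{0}(b_n)=P^{n-n}(b_n)$ absorbs it into the sum as the $i=n$ summand, yielding $r_n=P^{n}(r_0)+\sum_{i=1}^{n}P^{n-i}(b_i)$ and completing the induction.

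There is essentially no obstacle here: the argument is pure bookkeeping of the exponents on $P$, and the only structural fact needed is additivity of $P$ (multiplicativity plays no role), so the statement in fact holds verbatim for any additive endomorphism of the additive group of $R$.
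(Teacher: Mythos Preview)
Your proof is correct and follows the same approach as the paper, which simply states that the formula follows immediately by induction on $n$; you have merely written out the details. Your closing remark that only additivity of $P$ is used is a valid observation, though not needed for the paper's purposes.
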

\begin{proof} The formula follows immediately by induction on $n$.
\end{proof}

\begin{thm}\label{czapka} We have $$H_{m,n} =\sum_{\substack{m=\nu_1+\ldots+\nu_t\\ \nu_1,\ldots,\nu_t\geq 1}} \left(\prod_{i=1}^t \frac{a_{\nu_i+1}(\zeta_k-1)}{k(\nu_i+1)!}\right) \sum_{0\leq l_1<\ldots<l_t\leq n-1}P^{l_1}(q)^{\nu_1}\cdots P^{l_t}(q)^{\nu_t}. $$
In the formula, the sum is taken over all $t\geq 0$ and all tuples $(\nu_1,\ldots,\nu_t)\in\N^t$ such that $\nu_i\geq 1$ and $\nu_1+\ldots+\nu_t=m$. When $m=0$, one should interpret this formula as saying that $H_{0,n}=1$.
\end{thm}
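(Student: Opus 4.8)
The plan is to solve the recurrence for $H_{m,n}$ from Lemma \ref{lem3} by iterating it $n$ times, using Lemma \ref{sollinrec}, and then interpreting the resulting nested sum combinatorially. First I would set $c_r=\frac{a_r(\zeta_k-1)}{kr!}$ for $r\geq 2$, so that the recurrence reads $H_{m,n}=P(H_{m,n-1})+\sum_{r=2}^{m+1}c_r\,q^{r-1}P(H_{m+1-r,n-1})$, and the claimed formula becomes $H_{m,n}=\sum_{m=\nu_1+\ldots+\nu_t}\big(\prod_i c_{\nu_i+1}\big)\sum_{0\leq l_1<\ldots<l_t\leq n-1}\prod_i P^{l_i}(q)^{\nu_i}$. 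I would proceed by induction on $n$, with the base case $n=0$ forcing $t=0$ (the empty tuple, $m=0$) and giving $H_{0,0}=1$, consistent with $F_{0,0}=1$.

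For the inductive step, apply $P$ to the formula for $H_{m',n-1}$ for the various $m'$ appearing in the recurrence, and observe that $P$ commutes with everything: $P\big(P^{l_i}(q)^{\nu_i}\big)=P^{l_i+1}(q)^{\nu_i}$, so $P(H_{m',n-1})$ is the same sum but with indices $1\leq l_1<\ldots<l_t\leq n-1$. The term $P(H_{m,n-1})$ then contributes exactly those tuples $(l_1,\ldots,l_t)$ in the target formula with $l_1\geq 1$, i.e. where the index $0$ does not occur. The remaining terms $\sum_{r=2}^{m+1}c_r\,q^{r-1}P(H_{m+1-r,n-1})$ should supply precisely the tuples with $l_1=0$: writing $\nu_1=r-1\geq 1$, the factor $c_r q^{r-1}=c_{\nu_1+1}P^0(q)^{\nu_1}$ prepends a new smallest index $l_1=0$ with exponent $\nu_1$, while $P(H_{m-\nu_1,n-1})$ by induction runs over all $(t-1)$-tuples $1\leq l_2<\ldots<l_t\leq n-1$ with weights $\prod_{i\geq 2}c_{\nu_i+1}$ and $\nu_2+\ldots+\nu_t=m-\nu_1$. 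Summing over $\nu_1$ from $1$ to $m$ (equivalently $r$ from $2$ to $m+1$) reconstitutes all compositions of $m$ whose first part is split off, with the $0$-index attached. Adding the two contributions gives all tuples with $0\leq l_1<\ldots<l_t\leq n-1$, which is the claim.

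The main thing to be careful about — and the only real obstacle — is the bookkeeping of the two cases "$0\notin\{l_1,\ldots,l_t\}$" versus "$l_1=0$", together with checking that the composition $(\nu_1,\ldots,\nu_t)$ is correctly reassembled: the first part $\nu_1$ comes from the index $r$ in the recurrence and the rest from the inductive hypothesis, and one must verify the weights multiply correctly and that the ranges of summation match (in particular that $\nu_1+1\leq m+1$, i.e. $r$ ranges up to $m+1$, exactly as in Lemma \ref{lem3}, and that when $m=0$ no term with $l_1=0$ appears so $H_{0,n}=1$ for all $n$). Everything else is a routine manipulation of commuting sums, since $P$ is a ring endomorphism of the domain $\mathrm{Sym}(V)$ and all coefficients $c_r$ are scalars.
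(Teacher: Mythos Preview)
Your proposal is correct and follows essentially the same approach as the paper: both arguments verify that the claimed formula satisfies the recurrence of Lemma~\ref{lem3} by splitting the sum according to the value of $l_1$, identifying the piece with $l_1=0$ (respectively $l_1=i$) with the term coming from $\nu_1=r-1$ in the recurrence. The only organizational difference is that the paper first invokes Lemma~\ref{sollinrec} to unroll the recurrence completely and then matches terms for every value of $l_1\in\{0,\ldots,n-1\}$ at once, whereas you do straight induction on $n$ and split only into the two cases $l_1=0$ versus $l_1\geq 1$; despite mentioning Lemma~\ref{sollinrec} in your plan, your execution never actually needs it, which makes your version marginally more direct.
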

\begin{proof} Define $H'_{m,n}$ by the formula stated in the theorem. We will prove that $H_{m,n}=H'_{m,n}$. Since $\mathbf{u}(0)=0$, we get $H_{m,0}=H'_{m,0}=0$ for $m\geq 1$ and $H_{0,0}=H'_{0,0}=1$. Applying Lemma \ref{sollinrec} to Lemma \ref{lem3}, we get \begin{align*} H_{m,n}&=\sum_{i=1}^n\sum_{r=2}^{m+1}\frac{a_r (\zeta_k-1)}{kr!}P^{n-i}(q)^{r-1}P^{n-i+1}(H_{m+1-r,i-1})\\&=\sum_{r=2}^{m+1}\frac{a_r (\zeta_k-1)}{kr!}\sum_{i=0}^{n-1}P^{i}(q)^{r-1}P^{i+1}(H_{m+1-r,n-i-1})\end{align*} To end the proof, it is sufficient to note that the sequence $H'_{m,n}$ satisfies the same recurrence. This is easily verified to be the case. In fact, the terms in the recurrence with a given $r$ and $i$ correspond to the terms in the formula with $\nu_1=r-1$  and $l_1=i$.\end{proof}

\begin{cor}\label{closedF} Let $k=2$ and $\mathbf{u}(n)=n$. We then have $$F_{m+n,n} =(-1)^n (n+m)! 2^{\frac{n(n-1)}{2}} \!\!\!\!\!\!\!\!\!\sum_{\substack{m=\nu_1+\ldots+\nu_t\\ \nu_1,\ldots,\nu_t\geq 1}} \left(\prod_{i=1}^t \frac{1}{(\nu_i+1)!}\right) \sum_{0\leq l_1<\ldots<l_t\leq n-1}2^{l_1\nu_1+\ldots+l_t\nu_t}. $$
\end{cor}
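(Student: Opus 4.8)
The plan is to specialize the closed-form expression for $H_{m,n}$ from Theorem~\ref{czapka} to the case $k=2$ and $\mathbf{u}(n)=n$, and then invert the definition (\ref{defH}) to recover $F_{m+n,n}$. First I would record the relevant data for this case: we have $V=K$, $P$ is multiplication by $2$, $q=1$, and $\zeta_2=-1$, so that $\zeta_k-1=-2$. From the easy special values noted before Theorem~\ref{thm1}, for $k=2$ we have $a_0=0$ and $a_r=-1$ for all $r\geq 1$. Consequently each factor in the product appearing in Theorem~\ref{czapka} becomes
\[
\frac{a_{\nu_i+1}(\zeta_k-1)}{k(\nu_i+1)!}=\frac{(-1)(-2)}{2(\nu_i+1)!}=\frac{1}{(\nu_i+1)!},
\]
which already matches the product $\prod_{i=1}^t \frac{1}{(\nu_i+1)!}$ in the statement of the corollary.

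Next I would deal with the inner sum. Since $P$ is multiplication by $2$ and $q=1$, we have $P^{l}(q)=2^{l}$, so $P^{l_1}(q)^{\nu_1}\cdots P^{l_t}(q)^{\nu_t}=2^{l_1\nu_1+\cdots+l_t\nu_t}$, giving exactly the term $\sum_{0\leq l_1<\ldots<l_t\leq n-1}2^{l_1\nu_1+\ldots+l_t\nu_t}$ that appears in the corollary. Therefore Theorem~\ref{czapka} yields
\[
H_{m,n}=\sum_{\substack{m=\nu_1+\ldots+\nu_t\\ \nu_1,\ldots,\nu_t\geq 1}}\left(\prod_{i=1}^t\frac{1}{(\nu_i+1)!}\right)\sum_{0\leq l_1<\ldots<l_t\leq n-1}2^{l_1\nu_1+\ldots+l_t\nu_t}.
\]
It then remains to substitute into (\ref{defH}): we have
\[
F_{m+n,n}=\frac{k^n(n+m)!\prod_{i=0}^{n-1}P^i(q)}{(\zeta_k-1)^n}H_{m,n}=\frac{2^n(n+m)!\prod_{i=0}^{n-1}2^{i}}{(-2)^n}H_{m,n}.
\]

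Finally I would simplify the scalar prefactor. We have $\prod_{i=0}^{n-1}2^i=2^{0+1+\cdots+(n-1)}=2^{n(n-1)/2}$, and $\frac{2^n}{(-2)^n}=(-1)^n$, so the prefactor equals $(-1)^n(n+m)!\,2^{n(n-1)/2}$, which is precisely the constant standing in front of the double sum in the statement. Combining this with the expression for $H_{m,n}$ above gives exactly the claimed formula. There is no real obstacle here: the corollary is a routine specialization, and the only thing requiring any care is bookkeeping with the constants $k$, $\zeta_k-1$, $a_r$, and the product $\prod P^i(q)$, together with checking that the sign $(-1)^n$ comes out correctly from $2^n/(-2)^n$ rather than accidentally from $(\zeta_k-1)^n$ alone.
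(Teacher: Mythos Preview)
Your proof is correct and follows exactly the paper's approach: specialize the data $\zeta_2=-1$, $a_r=-1$ for $r\geq 1$, $q=1$, $P=$ multiplication by $2$ in Theorem~\ref{czapka}, then invert the definition (\ref{defH}) of $H_{m,n}$ to obtain $F_{m+n,n}$. The paper's own proof is a one-line remark to this effect, and your detailed bookkeeping with the constants is fine.
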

\begin{proof} Recall that for $k=2$ and $\mathbf{u}(n)=n$, we have $\zeta_k=-1$, $a_n=-1$ for $n\geq 1$, $q=1$ and $P$ is the multiplication by $2$ map. The formula follows immediately from Theorem \ref{czapka} and the definition of $H_{m,n}$.\end{proof}

For $m=0$ and $m=1$, this reproves the identities of Nieto and Bateman-Bradley. For $m=2$, we get
\begin{equation*}
f_{n+2,n}(x)=(-1)^{n}2^{\frac{n(n-1)}{2}}(n+2)!\left(\frac{1}{2}x^2+\frac{1}{2}(2^{n-1}-1)x+\frac{1}{36}(5\cdot 2^{2n}-9\cdot 2^{n}+4)\right).
\end{equation*}
We state below some special values of $H_{m,n}$ for $m=0,1$ and $F_{m,n}$ for $m=n$ and $m=n+1$.  All these computations follow immediately from Theorem \ref{czapka}.
%For simplicity, for $m=1$ we only state explicitly the case of $k=2$.
\begin{cor} We have $H_{0,n}=1$ and $H_{1,n}=c_k\sum_{i=0}^{n-1}P^i(q),$ where $c_k=\frac{(k-2)\zeta_k-k}{2(\zeta_k-1)}$ \textup{(}so that $c_2=1/2$\textup{)}.
\end{cor}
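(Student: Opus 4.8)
The plan is to read off both identities directly from the closed formula in Theorem \ref{czapka}, exploiting the fact that for $m=0$ and $m=1$ the outer sum over compositions $m=\nu_1+\ldots+\nu_t$ with all $\nu_i\geq 1$ degenerates to a single term.

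First I would treat $m=0$. The only tuple $(\nu_1,\ldots,\nu_t)$ with every $\nu_i\geq 1$ and $\nu_1+\ldots+\nu_t=0$ is the empty tuple, i.e.\ $t=0$. Its coefficient $\prod_{i=1}^{t}\frac{a_{\nu_i+1}(\zeta_k-1)}{k(\nu_i+1)!}$ is an empty product equal to $1$, and the inner sum $\sum_{0\leq l_1<\ldots<l_t\leq n-1}P^{l_1}(q)^{\nu_1}\cdots P^{l_t}(q)^{\nu_t}$ is likewise an empty product equal to $1$. Hence $H_{0,n}=1$, matching the convention stated in Theorem \ref{czapka}. Next, for $m=1$ the only admissible composition is $t=1$, $\nu_1=1$, so the coefficient becomes $\frac{a_2(\zeta_k-1)}{k\cdot 2!}=\frac{a_2(\zeta_k-1)}{2k}$ and the inner sum collapses to $\sum_{l_1=0}^{n-1}P^{l_1}(q)$. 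This gives $H_{1,n}=\frac{a_2(\zeta_k-1)}{2k}\sum_{i=0}^{n-1}P^i(q)$.

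It then remains to identify the scalar: I would substitute the closed form $a_2=k((k-2)\zeta_k-k)/(\zeta_k-1)^2$ recorded just before Theorem \ref{thm1}, which simplifies $\frac{a_2(\zeta_k-1)}{2k}$ to exactly $c_k=\frac{(k-2)\zeta_k-k}{2(\zeta_k-1)}$, and specialize to $k=2$ (with $\zeta_2=-1$) to obtain $c_2=\frac{-2}{-4}=\frac12$. There is no genuine obstacle here; the only points requiring care are the empty-tuple/empty-product conventions in the $m=0$ case and the routine algebraic simplification of the $a_2$ expression, both of which are straightforward.
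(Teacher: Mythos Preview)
Your proposal is correct and follows exactly the approach indicated in the paper, which simply states that these computations follow immediately from Theorem~\ref{czapka}. You have filled in the details appropriately, correctly handling the empty-tuple convention for $m=0$ and the single-composition case for $m=1$, and the algebraic simplification of $\frac{a_2(\zeta_k-1)}{2k}$ to $c_k$ is accurate.
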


\begin{cor}\label{cor1}
We have
\begin{align*}
F_{n,n}&=d_k n!\prod_{i=0}^{n-1}P^{i}(q),\\
F_{n+1,n}&=e_k (n+1)!\left(\prod_{i=0}^{n-1}P^{i}(q)\right)\sum_{j=0}^{n-1}P^j(q).
\end{align*}
where $$d_k=\frac{k^{n}}{(\zeta_{k}-1)^{n}}, \quad e_k=\frac{k^n((k-2)\zeta_{k}-k)}{2(\zeta_{k}-1)^{n+1}}.$$ In particular, for $k=2$, we have $d_k=(-1)^n, e_k=(-1)^n/2$.
\end{cor}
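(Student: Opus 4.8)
\emph{Proof proposal.} The plan is to read both identities off directly from Theorem \ref{czapka} by inverting the defining relation (\ref{defH}). Solving (\ref{defH}) for $F_{m+n,n}$ gives
$$F_{m+n,n}=\frac{k^{n}(n+m)!}{(\zeta_{k}-1)^{n}}\left(\prod_{i=0}^{n-1}P^{i}(q)\right)H_{m,n},$$
so everything reduces to knowing $H_{0,n}$ and $H_{1,n}$, which are exactly the special values of $H_{m,n}$ computed in the corollary just preceding this one (and which follow from Theorem \ref{czapka} by inspecting the tuples of weight $m$).

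First I would take $m=0$. The only tuple $(\nu_{1},\ldots,\nu_{t})$ with $\nu_{i}\geq1$ and $\nu_{1}+\cdots+\nu_{t}=0$ is the empty one $(t=0)$, so Theorem \ref{czapka} (with the stated convention) gives $H_{0,n}=1$. Substituting into the displayed formula with $m=0$ yields $F_{n,n}=\frac{k^{n}}{(\zeta_{k}-1)^{n}}\,n!\prod_{i=0}^{n-1}P^{i}(q)$, which is the claimed identity with $d_{k}=k^{n}/(\zeta_{k}-1)^{n}$. Next I would take $m=1$. Here the only admissible tuple is $t=1$, $\nu_{1}=1$, so Theorem \ref{czapka} gives $H_{1,n}=\frac{a_{2}(\zeta_{k}-1)}{2k}\sum_{l=0}^{n-1}P^{l}(q)$; using the closed form $a_{2}=k((k-2)\zeta_{k}-k)/(\zeta_{k}-1)^{2}$ recorded before Theorem \ref{thm1}, the scalar simplifies to $c_{k}=((k-2)\zeta_{k}-k)/(2(\zeta_{k}-1))$. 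Plugging $H_{1,n}=c_{k}\sum_{i=0}^{n-1}P^{i}(q)$ into the inverted relation with $m=1$ gives $F_{n+1,n}=\frac{k^{n}(n+1)!}{(\zeta_{k}-1)^{n}}c_{k}\left(\prod_{i=0}^{n-1}P^{i}(q)\right)\sum_{j=0}^{n-1}P^{j}(q)$, i.e. the asserted formula with $e_{k}=k^{n}((k-2)\zeta_{k}-k)/(2(\zeta_{k}-1)^{n+1})$.

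Finally, for the case $k=2$ I would simply substitute $\zeta_{2}=-1$, so that $(\zeta_{2}-1)^{n}=(-2)^{n}$ and $(k-2)\zeta_{k}-k=-2$; then $d_{2}=2^{n}/(-2)^{n}=(-1)^{n}$ and $e_{2}=2^{n}\cdot(-2)/\bigl(2(-2)^{n+1}\bigr)=(-1)^{n}/2$. I do not expect any genuine obstacle: the whole statement is a mechanical specialization of Theorem \ref{czapka}, and the only point that deserves a moment's care is the bookkeeping involved in extracting $H_{1,n}$ from the sum over tuples and matching the resulting constant against the stored value of $a_{2}$.
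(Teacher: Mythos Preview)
Your proposal is correct and matches the paper's approach exactly: the paper states that both this corollary and the preceding one giving $H_{0,n}$ and $H_{1,n}$ ``follow immediately from Theorem~\ref{czapka},'' which is precisely what you do by inverting~(\ref{defH}) and specializing the sum over tuples to $m=0$ and $m=1$. The arithmetic checks (including the $k=2$ specialization) are all fine.
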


We apply these result to get some interesting identities involving the sum of digits function $s_2(n)$.

\begin{cor}\label{cor2}
Let $n\geq 0$ be an integer. We then have
\begin{equation*}
\sum_{i=0}^{2^{n}-1}(-1)^{s_{2}(i)}s_{2}(i)^{r}i^{m-r}=0\end{equation*} for $0\leq r \leq m < n$ and \begin{align*}
\sum_{i=0}^{2^{n}-1}(-1)^{s_{2}(i)}s_{2}(i)^{r}i^{n-r}&=(-1)^n r!(n-r)!\sigma_{n-r}(1,2,\ldots,2^{n-1})
\end{align*}
 for $0\leq r \leq n$, where $\sigma_{i}$ is the $i$-th symmetric polynomial in $n$ variables.
\end{cor}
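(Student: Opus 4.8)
The plan is to derive Corollary \ref{cor2} from Corollary \ref{cor1} by a vector-valued specialization of the sequence $\mathbf{u}$, exploiting the Remark about direct sums. Concretely, I would take $V=K^2$ and consider the sequence $\mathbf{w}=(\mathbf{u}_1,\mathbf{u}_2)$ where $\mathbf{u}_1(n)=s_2(n)$ (so $P_1=\id$, $q_1=1$) and $\mathbf{u}_2(n)=n$ (so $P_2=$ multiplication by $2$, $q_2=1$); by the Remark this satisfies recurrence (\ref{czajnik}) with $P=\id\oplus(\text{mult.\ by }2)$ and $q=(1,1)$. Writing $\mathrm{Sym}(V)=K[y,z]$ with $y,z$ dual to the two coordinates, the element $F^{\mathbf{w}}_{m,n}\in K[y,z]$ is by definition $\sum_{i=0}^{2^n-1}(-1)^{s_2(i)}(s_2(i)y+iz)^m$, whose coefficient of $y^r z^{m-r}$ is exactly $\binom{m}{r}\sum_{i=0}^{2^n-1}(-1)^{s_2(i)}s_2(i)^r i^{m-r}$. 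So both identities claimed in the corollary amount to reading off one homogeneous coefficient of $F^{\mathbf{w}}_{m,n}$.

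For the vanishing statement ($m<n$), I would simply invoke Proposition \ref{lem2}, which gives $f^{\mathbf{w}}_{m,n}=0$ and hence $F^{\mathbf{w}}_{m,n}=0$ in $K[y,z]$; comparing the coefficient of $y^r z^{m-r}$ and cancelling $\binom{m}{r}\neq 0$ yields the first formula. For the case $m=n$, I would apply Corollary \ref{cor1} with this $\mathbf{w}$: since $k=2$, $d_k=(-1)^n$, so
$$F^{\mathbf{w}}_{n,n}=(-1)^n n!\prod_{i=0}^{n-1}P^i(q)=(-1)^n n!\prod_{i=0}^{n-1}(1\cdot y+2^i z),$$
using that $P^i(q)=(P_1^i(1),P_2^i(1))=(1,2^i)$ corresponds to the linear form $y+2^i z$. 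Expanding the product $\prod_{i=0}^{n-1}(y+2^i z)=\sum_{r=0}^n \sigma_{n-r}(1,2,\ldots,2^{n-1})\, y^r z^{n-r}$ by the standard generating-function identity for elementary symmetric polynomials, and matching the coefficient of $y^r z^{n-r}$ against $\binom{n}{r}\sum_i(-1)^{s_2(i)}s_2(i)^r i^{n-r}$, gives
$$\binom{n}{r}\sum_{i=0}^{2^n-1}(-1)^{s_2(i)}s_2(i)^r i^{n-r}=(-1)^n n!\,\sigma_{n-r}(1,2,\ldots,2^{n-1}),$$
and dividing by $\binom{n}{r}=n!/(r!(n-r)!)$ produces the stated closed form with the factor $r!(n-r)!$.

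The only genuinely delicate point is the bookkeeping that identifies $P^i(q)\in\mathrm{Sym}(V)$ with the linear form $y+2^i z\in K[y,z]$ — i.e.\ checking that the endomorphism of $\mathrm{Sym}(V)$ induced by $P=P_1\oplus P_2$ acts on the degree-one part $V^\vee$-dually in the way the notation suggests, so that $\prod_{i=0}^{n-1}P^i(q)$ really is the product of linear forms whose expansion produces the elementary symmetric polynomials in $1,2,\ldots,2^{n-1}$. This is routine but worth stating carefully, since the whole corollary hinges on it; everything else is a direct substitution into Propositions and Corollaries already proved, plus the elementary identity $\prod_{i}(1+x_i t)=\sum_j \sigma_j(x)t^j$.
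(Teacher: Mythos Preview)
Your proof is correct and follows essentially the same approach as the paper: both take the two-dimensional sequence $\mathbf{u}(n)=s_2(n)v_1+nv_2$ (equivalently your $\mathbf{w}=(\mathbf{u}_1,\mathbf{u}_2)$), invoke Proposition~\ref{lem2} for the vanishing and Corollary~\ref{cor1} for $F_{n,n}^{\mathbf{u}}=(-1)^n n!\prod_{i=0}^{n-1}(v_1+2^i v_2)$, and then read off the coefficient of $v_1^r v_2^{n-r}$. Your worry about duality is unnecessary --- in the paper's conventions $\mathrm{Sym}(V)$ is generated by $V$ itself (not $V^\vee$), so $P^i(q)$ is literally the element $v_1+2^i v_2\in V\subset\mathrm{Sym}(V)$ and the product expands exactly as you wrote.
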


\begin{proof} Consider the sequence ${\mathbf u}(n)=s_{2}(n)v_1+nv_2$ with values in a two dimensional vector space $V$ with basis $v_1, v_2$. This sequence satisfies the recurrence (\ref{czajnik}) with $q=v_1+v_2$ and $P$  given by the matrix $\bigl(\begin{smallmatrix} 1&0\\ 0&2 \end{smallmatrix} \bigr)$ (in the basis $v_1,v_2$). The sum in the theorem is equal to the coefficient of $v_{1}^rv_{2}^{n-r}$ in $F_{m,n}^{\mathbf{u}}$ in the first case and the coefficient of $v_1^rv_2^{n-r}$ in $F_{n,n}^{\mathbf{u}}$ in the second case. We get the claim by Proposition \ref{lem2} and Corollary \ref{cor1}, respectively.
\end{proof}

We can obtain similar results for different choices of the sequence $\mathbf{u}$. We give one more example below.

\begin{cor}\label{cor3} We have
\begin{equation*}
\sum_{i=0}^{2^{n}-1}(-1)^{s_{2}(i)}(2^{r}s_{2}(i)-i)^{m}=0,
\end{equation*}
for integers $n,m\geq0$ and $0 \leq r \leq n-1$.
\end{cor}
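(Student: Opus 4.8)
The plan is to follow the same strategy as in the proof of Corollary \ref{cor2}, working with the two-dimensional sequence $\mathbf{u}(n)=s_{2}(n)v_{1}+nv_{2}$ taking values in $V=Kv_{1}\oplus Kv_{2}$. As noted there, this sequence satisfies the recurrence (\ref{czajnik}) with $k=2$, $\zeta_{k}=-1$, $q=v_{1}+v_{2}$ and $P$ given in the basis $v_{1},v_{2}$ by the matrix $\bigl(\begin{smallmatrix}1&0\\0&2\end{smallmatrix}\bigr)$. The starting observation is that the quantity $2^{r}s_{2}(i)-i$ is obtained from $\mathbf{u}(i)=s_{2}(i)v_{1}+iv_{2}$ by the evaluation $v_{1}\mapsto 2^{r}$, $v_{2}\mapsto-1$; this extends to a ring homomorphism $\phi\colon\mathrm{Sym}(V)\to K$, and applying $\phi$ to
\[
F_{m,n}^{\mathbf{u}}=\sum_{i=0}^{2^{n}-1}(-1)^{s_{2}(i)}\bigl(s_{2}(i)v_{1}+iv_{2}\bigr)^{m}
\]
yields exactly $\sum_{i=0}^{2^{n}-1}(-1)^{s_{2}(i)}(2^{r}s_{2}(i)-i)^{m}$. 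Hence it suffices to show that $\phi(F_{m,n}^{\mathbf{u}})=0$.

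For $m<n$ this is immediate from Proposition \ref{lem2}, since then $F_{m,n}^{\mathbf{u}}=0$. The case $m\geq n$ is the only one requiring an argument, and here I would use Proposition \ref{divisibility}: in the domain $\mathrm{Sym}(V)$ the element $F_{m,n}^{\mathbf{u}}$ is divisible by $\prod_{j=0}^{n-1}P^{j}(q)$. Since $P^{j}(q)=v_{1}+2^{j}v_{2}$, we may write $F_{m,n}^{\mathbf{u}}=G\cdot\prod_{j=0}^{n-1}(v_{1}+2^{j}v_{2})$ for some $G\in\mathrm{Sym}(V)$, and applying the homomorphism $\phi$ gives
\[
\phi(F_{m,n}^{\mathbf{u}})=\phi(G)\prod_{j=0}^{n-1}\bigl(2^{r}-2^{j}\bigr).
\]
Because $0\leq r\leq n-1$, the factor with $j=r$ equals $2^{r}-2^{r}=0$, so the product vanishes and $\phi(F_{m,n}^{\mathbf{u}})=0$, as desired.

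There is essentially no serious obstacle in this argument; the only point to be careful about is that Proposition \ref{lem2} by itself does not reach the range $m\geq n$, so one genuinely needs the divisibility statement of Proposition \ref{divisibility}. The key structural observation is simply that the linear factor $v_{1}+2^{r}v_{2}$ annihilated by $\phi$ occurs among $P^{0}(q),\ldots,P^{n-1}(q)$ precisely when $0\leq r\leq n-1$, which is exactly the hypothesis imposed on $r$; this also indicates why the identity should not be expected to hold once $r\geq n$.
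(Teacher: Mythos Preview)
Your proof is correct and follows essentially the same approach as the paper: both set up the two-dimensional sequence $\mathbf{u}(n)=s_{2}(n)v_{1}+nv_{2}$, define the evaluation $\phi\colon v_{1}\mapsto 2^{r},\ v_{2}\mapsto -1$, and conclude by observing that $\phi(P^{r}(q))=0$ together with Proposition~\ref{divisibility} forces $\phi(F_{m,n}^{\mathbf{u}})=0$. The only cosmetic differences are that the paper does not split off the case $m<n$ separately (divisibility already covers it) and invokes divisibility by the single factor $P^{r}(q)$ rather than the full product, but the argument is the same.
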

\begin{proof} As in Corollary $\ref{cor2}$, we consider the sequence ${\mathbf u}(n)=s_{2}(n)v_1+nv_2$ with values in a two dimensional vector space $V$ with basis $v_1, v_2$. We have $P^r(q)=v_1+2^r v_2$ for $0 \leq r \leq n-1$. Consider the linear map $\varphi \colon V \to K$ such that $\varphi(v_1)=2^r$, $\varphi(v_2)=-1$. This map extends to a homomorphism of $K$-algebras $\varphi\colon \mathrm{Sym}(V)\to K$ and $\varphi(P^r(q))=0$. The expression in the corollary is equal to $\varphi(F_{m,n})$. Since by Proposition \ref{divisibility} we know that $F_{m,n}$ is divisible by $P^r(q)$, we get $$\varphi(F_{m,n})=\sum_{i=0}^{2^{n}-1}(-1)^{s_{2}(i)}(2^{r}s_{2}(i)-i)^{m}=0.$$ \end{proof}

We have already seen in Lemma \ref{lem3} that the sequence $H_{m,n}$ satisfies a certain recurrence relation. We will now prove that it also satisfies a linear recurrence in $n$.

\begin{thm}\label{uscor} Consider the induced action of $P$ on the space $$S_m=\oplus_{d=0}^m \mathrm{Sym}^d V.$$ This space has dimension $M=\dim S_m = \binom{m+\dim V}{m}$. Let $$\chi_{P,m}(y)=\det(\mathrm{id}-Py\mid S_m)$$ be the reciprocal of the characteristic polynomial of the action of $P$ on $S_m$. \textup{(}In particular, $\deg\chi_{P,m} \leq M$ and equality holds when $P$ is invertible.\textup{)} Put $$H_m = \sum_{n\geq 0} H_{m,n}y^n$$ \textup{(}this is a power series with coefficients in $\mathrm{Sym}^m(V)$\textup{)}. Then $H_m$ is a rational function or more precisely $$H_m=\frac{h(y)}{\chi_{P,m}(y)}$$ for some polynomial $h(y)$ with coefficients in $\mathrm{Sym}^m(V)$ of degree $\deg h < M$.
\end{thm}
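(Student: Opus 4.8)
The plan is to turn the (mixed-index) recurrence of Lemma~\ref{lem3} into a genuine order-one linear recurrence on the finite-dimensional $K$-vector space $S_m$, solve it in closed form by the adjugate (Cramer) formula, and then observe that the resulting denominator is exactly $\chi_{P,m}(y)$ because the operator encoding the recurrence is triangular with respect to the grading.

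Concretely, recall from the discussion after (\ref{defH}) that $H_{d,n}\in\mathrm{Sym}^d V$, and note that (\ref{recH}) expresses $H_{m,n}$ in terms of $H_{0,n-1},\dots,H_{m,n-1}$ only (the term $H_{m+1-r,n-1}$ with $r\ge2$ has first index $\le m-1$). Hence, assembling $\mathbf H_n=(H_{0,n},H_{1,n},\dots,H_{m,n})\in S_m$, we have $\mathbf H_n=T(\mathbf H_{n-1})$ for $n\ge1$, where $T\colon S_m\to S_m$ is the $K$-linear map whose degree-$d$ component is
$$(T\mathbf x)_d=P(x_d)+\sum_{r=2}^{d+1}\frac{a_r(\zeta_k-1)}{kr!}\,q^{r-1}P(x_{d+1-r}),$$
multiplication by $q^{r-1}$ being viewed as the $K$-linear map $\mathrm{Sym}^{d+1-r}V\to\mathrm{Sym}^d V$ and $P$ as its grading-preserving action on $\mathrm{Sym}(V)$; so $T$ is genuinely a $K$-linear endomorphism of the $M$-dimensional space $S_m$. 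By Theorem~\ref{czapka} (or directly) the starting vector is $\mathbf H_0=(1,0,\dots,0)$, hence $\mathbf H_n=T^n(\mathbf H_0)$ and, inside $S_m[[y]]$,
$$\sum_{n\ge0}\mathbf H_n\, y^n=\sum_{n\ge0}(Ty)^n(\mathbf H_0)=(\mathrm{id}-Ty)^{-1}(\mathbf H_0).$$
Since $\det(\mathrm{id}-Ty)$ has constant term $1$, it is a unit in $K[[y]]$, so (after choosing a basis of $S_m$) we may write $(\mathrm{id}-Ty)^{-1}=\adj(\mathrm{id}-Ty)/\det(\mathrm{id}-Ty)$.

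The decisive point is that $\det(\mathrm{id}-Ty)=\chi_{P,m}(y)$. With respect to $S_m=\bigoplus_{d=0}^m\mathrm{Sym}^d V$ the operator $T$ is block triangular: the formula above shows that $(T\mathbf x)_d$ depends only on $x_0,\dots,x_d$, the diagonal block (the coefficient of $x_d$) being precisely $P$ on $\mathrm{Sym}^d V$, while the off-diagonal terms carry a factor $q^{r-1}$ with $r\ge2$ that strictly raises the degree. Therefore $\det(\mathrm{id}-Ty)=\prod_{d=0}^m\det(\mathrm{id}-Py\mid\mathrm{Sym}^d V)=\det(\mathrm{id}-Py\mid S_m)=\chi_{P,m}(y)$, using that $P$ respects the grading of $\mathrm{Sym}(V)$. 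For the numerator, the entries of $\mathrm{id}-Ty$ are polynomials in $y$ of degree $\le1$, so each entry of $\adj(\mathrm{id}-Ty)$ — an $(M-1)\times(M-1)$ minor — has degree $\le M-1$; hence $\adj(\mathrm{id}-Ty)\mathbf H_0$ lies in $S_m[y]$ with every entry of degree $<M$. Taking its component in $\mathrm{Sym}^m V$ yields a polynomial $h(y)$ with coefficients in $\mathrm{Sym}^m(V)$ and $\deg h<M$, and the degree-$m$ component of the last displayed identity reads $H_m=\sum_{n\ge0}H_{m,n}y^n=h(y)/\chi_{P,m}(y)$, as required.

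I do not anticipate a real obstacle here: the argument is elementary linear algebra once the recurrence has been correctly vectorized. The only steps that need genuine care are checking that (\ref{recH}) is honestly triangular in the grading — which is exactly what makes the characteristic polynomial of $T$ collapse to the much simpler $\chi_{P,m}$ — and noting that $T$ acts $K$-linearly on the finite-dimensional space $S_m$ (rather than merely $\mathrm{Sym}(V)$-linearly), so that the adjugate formula applies with polynomial entries in $K[y]$. Both verifications are short; the remainder is formal manipulation of power series and determinants.
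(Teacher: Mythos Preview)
Your argument is correct, and it is genuinely different from the paper's. The paper proceeds by induction on $m$: at each stage it works only with the single series $H_m\in\mathrm{Sym}^m(V)[[y]]$, uses the inductive hypothesis to write the right-hand side of $H_m-yP(H_m)=\sum_{r\ge2}c_rq^{r-1}yP(H_{m+1-r})$ as $g(y)/\chi_{P,m-1}(y)$, and then applies an adjugate trick to the $K[[y]][t]$-module generated by $H_m$ inside $K[[y]]\otimes\mathrm{Sym}^m V$ (with $t$ acting as $P$) to clear the remaining factor $\det(\mathrm{id}-Py\mid\mathrm{Sym}^m V)$. Your route is to package \emph{all} the $H_{d,n}$ for $0\le d\le m$ into a single vector in $S_m$ and observe that Lemma~\ref{lem3} is then a first-order $K$-linear recurrence $\mathbf H_n=T\mathbf H_{n-1}$; the block-triangular shape of $T$ immediately gives $\det(\mathrm{id}-Ty)=\chi_{P,m}(y)$, and one adjugate computation finishes both the denominator and the degree bound simultaneously. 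What your approach buys is a non-inductive, one-shot argument in which the factorization $\chi_{P,m}=\prod_d\det(\mathrm{id}-Py\mid\mathrm{Sym}^dV)$ comes for free from triangularity rather than being built up step by step; the paper's approach, by contrast, stays inside $\mathrm{Sym}^m V$ at each stage and so needs the module-theoretic detour to handle the action of $P$. The two verifications you flag (that $T$ is honestly $K$-linear on the finite-dimensional $S_m$, and that its diagonal blocks are exactly $P\mid\mathrm{Sym}^d V$) are the substantive points, and both are correct as stated.
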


\begin{cor} In the notation of the preceding theorem, write
  $\chi_{P,m}(y)=\sum_{j=0}^{M} b_j y^j$ with $b_j \in K$. Then $$H_{m,n}=-\sum_{j=1}^{M} b_j H_{m,n-j},\quad n\geq M.$$
\end{cor}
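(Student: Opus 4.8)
The plan is to deduce this corollary directly from Theorem \ref{uscor}. The idea is that a power series represented by a rational function whose denominator is $\chi_{P,m}(y)=\sum_{j=0}^M b_j y^j$ satisfies the linear recurrence obtained by reading off the coefficients of $y^n$ in the identity $\chi_{P,m}(y)H_m(y)=h(y)$. Concretely, I would first record that by Theorem \ref{uscor} we have $\chi_{P,m}(y)H_m = h(y)$ in the ring $\mathrm{Sym}^m(V)[[y]]$, with $\deg h < M$. Expanding the left-hand side, the coefficient of $y^n$ is $\sum_{j=0}^M b_j H_{m,n-j}$ (with the convention $H_{m,i}=0$ for $i<0$). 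For $n\geq M$ this coefficient must vanish, since $\deg h < M$. Hence $\sum_{j=0}^M b_j H_{m,n-j}=0$ for $n\geq M$.

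The only remaining point is to note that $b_0=1$: indeed $\chi_{P,m}(0)=\det(\mathrm{id}\mid S_m)=1$, so the constant term of $\chi_{P,m}$ is $1$. Therefore the relation $\sum_{j=0}^M b_j H_{m,n-j}=0$ can be solved for the $j=0$ term, giving $$H_{m,n}=-\sum_{j=1}^M b_j H_{m,n-j},\quad n\geq M,$$ which is exactly the claimed recurrence. I would write this out in two or three lines.

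I do not anticipate a genuine obstacle here; this is a formal consequence of Theorem \ref{uscor}. The one subtlety worth a sentence is the bookkeeping of negative indices: the identity $\chi_{P,m}(y)H_m=h(y)$ is an identity of honest power series, and when extracting the coefficient of $y^n$ one uses that $H_{m,i}$ only appears for $i\geq 0$, so for $n\geq M$ all $n-j$ with $0\leq j\leq M$ satisfy $n-j\geq 0$ and no convention is even needed. I would also remark in passing that $\deg\chi_{P,m}$ may be strictly smaller than $M$ when $P$ is not invertible, in which case the top coefficients $b_j$ vanish and the recurrence has effectively smaller order; the stated form with upper limit $M$ remains correct regardless.
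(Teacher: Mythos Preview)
Your proposal is correct and matches the paper's approach exactly: the paper simply states that the corollary follows immediately from Theorem \ref{uscor}, and your argument (clearing denominators and reading off the coefficient of $y^n$ for $n\geq M$, using $b_0=1$) is precisely the standard unpacking of that statement.
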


The corollary follows immediately from Theorem \ref{uscor}. Before we begin the proof of the theorem, note that the statement simplifies greatly when $V$ is the field $K$ and $P\colon V\to V$ is multiplication by $p$. In this case, $H_{m,n}$ satisfies a linear relation in $n$ of degree $m+1$ whose coefficients are equal to the coefficients of the polynomial $\prod_{i=0}^{m}(1-p^i y)$. It might be useful to think about this case when reading the following argument.

\begin{proof} We do induction on $m$. For $m=0$, we have $H_{0,n}=1$ and $H_0=1/(1-y)$ and so the claim holds in this case. Assume now that $m\geq 1$.

By Lemma $\ref{lem3}$, we have $$H_{m,n}=P(H_{m,n-1})+\sum_{r=2}^{m+1}c_r q^{r-1} P(H_{m+1-r,n-1})$$ for some $c_r\in K$. Multiplying this equality by $y^n$ and summing over all $n\geq 1$, we get $$H_m=yP(H_m)+\sum_{r=2}^{m+1} c_r q^{r-1}y P(H_{m+1-r}).$$ (Recall that $H_{m,0}=0$ for $m \geq 1$.) We obtain, by induction on $m$, the following equality
\begin{equation}\label{pietuszki}
H_m-yP(H_m)=\sum_{r=2}^{m+1} c_r q^{r-1}y P(H_{m+1-r})=\frac{g(y)}{\chi_{P,m-1}(y)}
\end{equation}
 for some $g(y)\in\mathrm{Sym}^m(V) [y]$ of degree $\deg g\leq M_{m-1} = \dim S_{m-1}$. (We use here the obvious fact that $\chi_{P,m_1}$ divides $\chi_{P,m_2}$ for $m_1\leq m_2$.) If $\dim V=1$ and $P$ is the multiplication by $p$ map, the left hand side is just $(1-p^my)H_m$ and we get the result simply after dividing by $(1-p^m y)$. In the case $\dim V>1$, we have to proceed more carefully.  Consider $N=K[\![y]\!]\otimes_K \mathrm{Sym}^m(V)$ with a structure of a $K[\![y]\!][t]-$module, where multiplication by $t$ is induced by the action of $P$ on $\mathrm{Sym}^m(V)$. As a $K[\![y]\!]-$module, $N$ is free of rank $\dim \mathrm{Sym}^m(V)$. Let $N'$ be the $K[\![y]\!][t]-$submodule of $N$ generated by $H_m$. Then, as a $K[\![y]\!]$-module, $N'$ is generated by $v_i=P^i(H_m)$ for $0\leq i \leq l-1$, where $l=\dim \mathrm{Sym}^m(V)$. We can write  $$tv_i=\sum_{j=0}^{l-1}a_{ij}v_j$$ for a certain $l\times l$ matrix $A=(a_{ij})$ with coefficients in $K$. By equation (\ref{pietuszki}), we have $$(I-Ay)\begin{pmatrix}v_0\\\vdots\\v_{l-1}\end{pmatrix}=\begin{pmatrix}w_0\\\vdots\\w_{l-1}\end{pmatrix},$$ where $w_i$ are power series of the form $\frac{g_j(y)}{\chi_{P,m-1}(y)}$ for some polynomials $g_j$ of degree $\deg g_j\leq M_{m-1}$. Consider now the adjoint matrix $(I-Ay)^{\adj}$ of the matrix $I-Ay$. Its coefficients are polynomials in $K[y]$ of degree smaller than $\dim \mathrm{Sym}^m(V)$. We then have $(I-Ay)^{\adj}(I-Ay) = \det(\id-Py\mid \mathrm{Sym}^m(V)) I$ and so $$\det(\id-Py\mid \mathrm{Sym}^m(V)) \begin{pmatrix}v_0\\\vdots\\v_{l-1}\end{pmatrix} = (I-Ay)^{\adj}\begin{pmatrix}w_0\\\vdots\\w_{l-1}\end{pmatrix}.$$ We can write $$ \begin{pmatrix}v_0\\\vdots\\v_{l-1}\end{pmatrix} = \det(\id-Py\mid \mathrm{Sym}^m(V))^{-1}(I-Ay)^{\adj}\begin{pmatrix}w_0\\\vdots\\w_{l-1}\end{pmatrix}.$$ Now, $w_i$ are power series with coefficients  $\frac{g_j(y)}{\chi_{P,m-1}(y)}$ for $g_j$ of degree at most $M_{m-1}$, $(I-Ay)^{\adj}$ has as its coefficients polynomials in $y$ of degree smaller than $l=\dim \mathrm{Sym}^m(V)$ and $$\chi_{P,m-1}(y)\det(\id-Py\mid \mathrm{Sym}^m(V))=\chi_{P,m}(y).$$ Hence all $v_i$ are rational functions of the form $\frac{h_i(y)}{\chi_{P,m}(y)}$ with polynomials $h_i(y)$ of degree smaller than $M_{m-1}+l=M$. This applies in particular to $v_0=H_m$, which is exactly what we wanted to prove.
\end{proof}

\section{Identities involving Prouhet-Thue-Morse like sequences}\label{Section3}

The property of the Prouhet-Thue-Morse sequence $t_{n}$ which says that $\sum_{i=0}^{2^n-1}t_{i}i^{m}=0$ for $m=0,1,\ldots,n-1$ implies the existence of sets $P_n, Q_{n}$ satisfying the following properties:
\begin{equation*}
 P_{n}\cup Q_{n}=\{0,1,\ldots,2^{n}-1\},\quad P_{n}\cap Q_{n}=\emptyset
\end{equation*}
and
\begin{equation}\label{partition}
 \sum_{i\in P_{n}}i^{m}=\sum_{i\in Q_{n}}i^{m}
\end{equation}
for each $m\in\{0,1,\ldots,n-1\}$. In fact, it is enough to take $$P_n=\{i\in\{0,1,\ldots,2^n-1\}\colon t_i=1\}$$ and $$Q_n=\{i\in\{0,1,\ldots,2^n-1\}\colon t_i=-1\}.$$ In particular, $P_{n}$ and $Q_{n}$ have the same number of elements (take m=0). A question arises whether for a certain $r$ it is possible to construct a partition of $\{0,1,\ldots,r\}$ into two sets of equal cardinalities so that a property analogous to (\ref{partition}) holds. For an interesting approach to this problem one can consult \cite{Cer}. We are especially interested in the value of $r$ of the form $k^n-1$; in particular, $k$ needs to be even since $P_n$ and $Q_n$ are supposed to be equinumerous. To state our results, we introduce the following notation.
Let $k$ be an integer $\geq 2$ and put $I_{k}=\{0,1,\ldots,k-1\}$. For any $l\in I_{k}$, let
\begin{center}
$N_{k}(l,i)$=number of occurrences of a digit $l$ in the expansion of \\an integer $i$ in base $k$.
\end{center}
%In particular, we have an identity $s_{k}(n)=\sum_{i=1}^{k-1}N_{k}(i,n)$.
 Moreover, we observe that for $l\in\{0,1,\ldots,k-1\}$ the function $N_{k}(l,i)$ satisfies the following recurrence relations
\begin{equation*}
 N_{k}(l,ki+j)=\begin{cases}
\begin{array}{lll}
N_{k}(l,i)   &  & \mbox{for}\;j\in I_{k}\setminus\{l\}, \\
N_{k}(l,i)+1 &  & \mbox{for}\;j=l
                            \end{array}
\end{cases}
\end{equation*}

Using this recurrence relation, one can easily prove the following useful result.

\begin{lem}\label{iden}
 Let $n$ and $k\geq 2$ be integers and $t_{0},\ldots,t_{k-1}$ be variables. Then the following identity holds
\begin{equation}\label{prod:iden}
 \prod_{i=0}^{n-1}\Big(\sum_{j=0}^{k-1}t_{j}x^{jk^{i}}\Big)=\sum_{i=0}^{k^{n}-1}\Big(\prod_{l=0}^{k-1}t_{i}^{N_{k}(l,i)}\Big)x^{i}.
\end{equation}
\end{lem}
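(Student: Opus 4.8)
The plan is to prove the identity in Lemma~\ref{iden} by induction on $n$, using the multiplicativity built into the base-$k$ expansion together with the recurrence for $N_k(l,i)$ recalled just above the statement. The base case $n=0$ is the empty product on the left, which equals $1$, matching the single term $i=0$ on the right (all exponents $N_k(l,0)$ vanish).

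For the inductive step, suppose the identity holds for $n$. Every integer $i$ with $0\le i\le k^{n+1}-1$ can be written uniquely as $i=ki'+j$ with $0\le i'\le k^{n}-1$ and $j\in I_k$; moreover in base $k$ this simply prepends the digit $j$, so $x^{i}=x^{j}\cdot (x^{i'})^{k}$. It is cleaner, though, to peel off the \emph{top} factor: write the left-hand side for $n+1$ as $\Bigl(\prod_{i=0}^{n-1}\sum_{j=0}^{k-1}t_j x^{jk^{i}}\Bigr)\cdot\Bigl(\sum_{j=0}^{k-1}t_j x^{jk^{n}}\Bigr)$, apply the inductive hypothesis to the first factor, and then note that multiplying $\sum_{i=0}^{k^{n}-1}\bigl(\prod_{l}t_i^{N_k(l,i)}\bigr)x^{i}$ by $\sum_{j=0}^{k-1}t_j x^{jk^{n}}$ produces exactly $\sum_{j=0}^{k-1}\sum_{i=0}^{k^{n}-1}\bigl(\prod_{l}t_i^{N_k(l,i)}\bigr)t_j\, x^{i+jk^{n}}$. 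As $i$ ranges over $0,\dots,k^{n}-1$ and $j$ over $I_k$, the quantity $i+jk^{n}$ ranges bijectively over $0,\dots,k^{n+1}-1$, and $i+jk^{n}$ has base-$k$ expansion obtained from that of $i$ by placing the digit $j$ in position $n$ (recall $i<k^{n}$ has at most $n$ digits). Hence $N_k(l,i+jk^{n})=N_k(l,i)$ for $l\neq j$ and $N_k(j,i+jk^{n})=N_k(j,i)+1$, which is precisely the recurrence relation stated above the lemma; therefore $t_j\prod_{l}t_i^{N_k(l,i)}=\prod_{l}t_{i+jk^{n}}^{\,N_k(l,i+jk^{n})}$. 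Substituting and re-indexing the double sum by $i''=i+jk^{n}$ gives $\sum_{i''=0}^{k^{n+1}-1}\bigl(\prod_{l}t_{i''}^{N_k(l,i'')}\bigr)x^{i''}$, completing the induction.

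There is no serious obstacle here; the only thing to be careful about is bookkeeping of the base-$k$ digit positions, i.e.\ checking that appending a new most-significant digit $j$ does not disturb the digit counts of the lower positions and that the index map $(i,j)\mapsto i+jk^{n}$ is a bijection $\{0,\dots,k^{n}-1\}\times I_k\to\{0,\dots,k^{n+1}-1\}$. One could equally run the argument peeling off the least-significant digit via $i=ki'+j$ and the substitution $x\mapsto x^{k}$ in the inductive hypothesis; both work, but the top-digit version avoids raising $x$ to a power and keeps the exponent matching transparent.
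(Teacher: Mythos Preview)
Your induction is correct and is exactly what the paper's one-line proof (``Regroup the terms'') means when spelled out: expanding the product amounts to choosing a digit in each position, and the induction just formalizes this one factor at a time. One small wording quibble: the recurrence displayed above the lemma is for $N_k(l,ki+j)$ (appending a \emph{least}-significant digit), whereas in your top-digit argument you are using the analogous fact for $N_k(l,i+jk^{n})$ (appending a \emph{most}-significant digit); you have already justified this directly from the base-$k$ expansion, so the proof stands, but it is not literally ``the recurrence relation stated above the lemma.''
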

\begin{proof} Regroup the terms.\end{proof}
% In order to prove the result we use simple induction on $m$. Let us define $F_{m}(x)=\prod_{i=0}^{n}\Big(1+\sum_{j=1}^{k-1}t_{j}x^{jk^{i}}\Big)$ and denote the right hand side of our identity by $G_{m}(x)$. We want to prove that $F_{m}=G_{m}$ for each $m\in\N$. Let us also put $H(n)=\prod_{i=1}^{k-1}t_{i}^{N_{k}(i,n)}$. It is clear that $F_{m}=G_{m}$ for $m=0$. Let us assume that it is true for some $m$ and observe that from the recurrence relations satisfied by $N_{k}(i,n)$ we have an equality $t_{j}H(n)=H(kn+j)$ for each $j\in I_{k}\setminus\{0\}$. We have the following chain of equalities
%\begin{align*}
% F_{m+1}(x)&=F_{m}(x^k)\Big(1+\sum_{i=1}^{k-1}t_{i}x^{i}\Big)      =\sum_{j=0}^{k^{m}-1}H(n)x^{n}+\sum_{j=1}^{k-1}\sum_{n=0}^{k^{m}-1}t_{j}H(n)x^{kn+j}\\
           %&=\sum_{j=0}^{k-1}\sum_{n=0}^{k^{m}-1}t_{j}H(kn+j)x^{kn+j}=\sum_{n=0}^{k^{m+1}-1}\Big(\prod_{i=1}^{k-1}t_{i}^{N_{k}(i,n)}\Big)x^{n},
%\end{align*}
%where the last equality follows from the equality $\{kn+j:\;n\in\{0,1,\ldots,k^{m}-1\},j\in I_{k}\}=\{0,1,\ldots,k^{m+1}-1\}$for any given $m$.

We are ready to state the following result.
Let $b$ be a positive integer and $k=2b$. Put $A_{b}=\{1,2,\ldots,2b-1\}$ and consider the set
$$\cal{A}_{b}=\{(c_{1},c_{2},\ldots c_{b})\in A_{b}^{b}:\;c_{i}\neq c_{j}\;\mbox{for}\;i\neq j\}.$$
In particular, $|\cal{A}_{b}|={2b-1\choose b}$. For a given $v=(v_{1},\ldots,v_{b})\in\cal{A}_{b}$, we put
\begin{equation*}
N_{k,v}(i)=\sum_{j=1}^{b}N_{k}(v_{j},i).
\end{equation*}

\begin{thm}\label{PTMlike} For integers $n>m\geq 0$, we have
$$
\sum_{i=0}^{k^{n}-1}(-1)^{N_{k,v}(i)}i^{m}=0.
$$
In particular, for an integer $n$ and an even $k\geq 2$, the set $\{0,1,\ldots,k^n-1\}$ can be partitioned into two disjoint subsets $P$ and $Q$  such that
$$
\sum_{i\in P}i^{m}=\sum_{i\in Q}i^{m}
$$
for each $m=0,1,\ldots,n-1$.
\end{thm}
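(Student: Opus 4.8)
The plan is to deduce Theorem \ref{PTMlike} from Lemma \ref{iden} by a generating function argument combined with a differential operator trick, mirroring the idea behind Prouhet's original proof. First I would specialize the variables in the identity (\ref{prod:iden}): for a fixed tuple $v=(v_1,\ldots,v_b)\in\mathcal{A}_b$, set $t_j=-1$ if $j\in\{v_1,\ldots,v_b\}$ and $t_j=1$ otherwise. Then $\prod_{l=0}^{k-1}t_i^{N_k(l,i)}=(-1)^{N_{k,v}(i)}$, so the right-hand side of (\ref{prod:iden}) becomes the generating polynomial
\begin{equation*}
G(x)=\sum_{i=0}^{k^n-1}(-1)^{N_{k,v}(i)}x^i=\prod_{i=0}^{n-1}\Big(\sum_{j=0}^{k-1}t_j x^{jk^i}\Big).
\end{equation*}

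The key observation is that each factor vanishes at $x=1$: since $k=2b$, among the $k$ values $t_0,\ldots,t_{k-1}$ exactly $b$ equal $-1$ (those with index in $\{v_1,\ldots,v_b\}$, all of which lie in $\{1,\ldots,2b-1\}$) and the remaining $b$ equal $+1$, so $\sum_{j=0}^{k-1}t_j=0$. Hence $G(x)$ is divisible by $(x-1)^n$ in $K[x]$. Now apply the operator $\big(x\frac{d}{dx}\big)^m$ to $G$ and evaluate at $x=1$: on one hand $\big(x\frac{d}{dx}\big)^m G(x)\big|_{x=1}=\sum_{i=0}^{k^n-1}(-1)^{N_{k,v}(i)}i^m$; on the other hand, since $G(x)=(x-1)^n R(x)$ for a polynomial $R$, and $x\frac{d}{dx}$ raises the order of vanishing at $x=1$ by at most... — more carefully, any polynomial with a zero of order $n$ at $x=1$ still has a zero of order at least $n-m$ after applying $\big(x\frac{d}{dx}\big)^m$, so for $m<n$ the value at $x=1$ is $0$. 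This gives the first assertion.

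For the partition statement, I would simply take $P=\{i\in\{0,\ldots,k^n-1\}\colon N_{k,v}(i)\text{ even}\}$ and $Q$ its complement, for any fixed $v\in\mathcal{A}_b$ (which is nonempty since $k=2b$). The case $m=0$ of the identity shows $|P|=|Q|$, and the cases $1\le m\le n-1$ give $\sum_{i\in P}i^m=\sum_{i\in Q}i^m$. The main obstacle — really the only point needing care — is the claim that applying $x\frac{d}{dx}$ at most $m$ times to a polynomial vanishing to order $n$ at $x=1$ leaves something vanishing to order $\ge n-m$ there; this follows because $x\frac{d}{dx}$ applied to $(x-1)^n R(x)$ produces $n x (x-1)^{n-1}R(x)+x(x-1)^n R'(x)$, which is divisible by $(x-1)^{n-1}$, and one iterates. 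Alternatively one can expand $i^m$ in the basis of falling factorials $\binom{i}{j}j!$ and use that $\frac{d^j}{dx^j}G(x)\big|_{x=1}=0$ for $j<n$ directly from the factorization, avoiding the operator $x\frac{d}{dx}$ altogether; I would probably present this cleaner variant.
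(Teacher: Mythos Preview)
Your proposal is correct and follows essentially the same route as the paper: specialize the $t_j$ in Lemma~\ref{iden} to $\pm 1$ according to $v$, observe that each factor vanishes at $x=1$ so the product has a zero of order at least $n$ there, and then apply the operator $\theta=x\frac{d}{dx}$ repeatedly and evaluate at $x=1$. The only differences are cosmetic---you spell out why $\theta$ drops the order of vanishing by at most one, and you mention the falling-factorial variant, but the paper's proof is otherwise identical.
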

\begin{proof}
In order to get the result, we will use  Lemma \ref{iden}. More precisely, let $v=(v_{1},\ldots,v_{b})\in\cal{A}_{b}$  be given. Put $V=\{v_{1},\ldots,v_{b}\}$. In the expression in Lemma \ref{iden}, we substitute
$$
t_{i}=\begin{cases}
\begin{array}{lll}
-1 &  &\mbox{for}\;i\in V  \\
+1 &  &\mbox{for}\;i\in\cal({A}_{b} \cup \{0\}) \setminus V.
\end{array}
\end{cases}
$$
For such $t_{i}$, we have $\sum_{j=0}^{k-1}t_{j}=0$ which implies that for each $0\leq i \leq n-1$ the polynomial $h_{i}(x)=\sum_{j=0}^{k-1}t_{j}x^{jk^{i}}$ has a root at $x=1$. This shows that the polynomial $F_{n}(x)=\prod_{i=0}^{n-1}h_{i}(x)$ has a root at $x=1$ with multiplicity at least $n$, so that $F_{n}(x)=(x-1)^{n}W_n(x)$. On the other hand,
$$
F_{n}(x)=\sum_{i=0}^{k^{n}-1}(-1)^{N_{k,v}(i)}x^{i}.
$$
Let $\theta$ be the differential operator defined by $\theta =x\frac{d}{dx}$, i.e., $\theta \Big(\sum_{i=0}^{s}a_{i}x^{i}\Big)=\sum_{i=0}^{s}ia_{i}x^{i}$. Applying $m$ times the operator $\theta$ to the polynomial $F_{n}$, we see that
$$
\theta^{m}(F_{n}(x))=\sum_{i=0}^{k^{n}-1}(-1)^{N_{k,v}(i)}i^{m}x^{i}
$$
and since $F_n(x)$ has a root with multiplicity at least $n$ at $x=1$, $\theta^{m}(F_{n}(x))$ has a root of order at least $n-m$ at $x=1$.  This shows that $\sum_{i=0}^{k^{n}-1}(-1)^{N_{k,v}(i)}i^{m}=0$ for $m=0,\ldots,n-1$. For the final statement, take $$P=\{i \in \{0,\ldots, k^n-1\} \colon (-1)^{N_{k,v}(i)}=1\}$$ and $$Q=\{i \in \{0,\ldots, k^n-1\} \colon (-1)^{N_{k,v}(i)}=-1\}.$$
\end{proof}

Using Theorem \ref{PTMlike} and the well known fact that $\binom x m$ can be written as a linear combination of $1, x, \ldots, x^{m}$, we easily get the following

\begin{cor}\label{nasturcja}
Let $v=(v_{1},\ldots,v_{b})\in\cal{A}_{b}$. Consider the polynomial
\begin{equation*}
f_{m,n}(v,x)=\sum_{i=0}^{k^{n}-1}(-1)^{N_{k,v}(i)}(x+i)^{m}.
\end{equation*}
Then:
\begin{enumerate}
\item If $m<n$, then $f_{m,n}(v,x)=0$.
\item If $m\geq n$, then  $\mathrm{deg}f_{m,n}(v,x)\leq m-n$. In particular,  $f_{n,n}(v,x)$ is a constant polynomial.
\end{enumerate}
 \end{cor}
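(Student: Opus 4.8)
The plan is to obtain both statements as formal consequences of Theorem~\ref{PTMlike}. The point is that, by linearity, Theorem~\ref{PTMlike} is equivalent to the assertion that $\sum_{i=0}^{k^{n}-1}(-1)^{N_{k,v}(i)}g(i)=0$ for every polynomial $g$ with $\deg g<n$: such a $g$ is a $K$-linear combination of $1,i,\dots,i^{\deg g}$, and each monomial sum vanishes by the theorem. (This linearity reformulation, together with the fact that any polynomial of degree $\le m$, in particular $\binom{x}{m}$, is a linear combination of $1,x,\dots,x^{m}$, is exactly what the remark preceding the corollary supplies.)

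First I would expand $(x+i)^{m}=\sum_{p=0}^{m}\binom{m}{p}x^{m-p}i^{p}$ and sum against the weights $(-1)^{N_{k,v}(i)}$ over $0\le i\le k^{n}-1$, obtaining
\[
f_{m,n}(v,x)=\sum_{p=0}^{m}\binom{m}{p}x^{m-p}S_{p,n},\qquad
S_{p,n}:=\sum_{i=0}^{k^{n}-1}(-1)^{N_{k,v}(i)}i^{p},
\]
so that $f_{m,n}(v,x)$ is controlled entirely by the numbers $S_{p,n}$, which vanish for $0\le p\le n-1$ by Theorem~\ref{PTMlike}. For part (1), if $m<n$ then every $p\in\{0,\dots,m\}$ satisfies $p<n$, hence $S_{p,n}=0$ for all such $p$ and $f_{m,n}(v,x)=0$ identically; alternatively, apply the reformulation above to $g(i)=(x+i)^{m}$, which has degree $m<n$ in $i$. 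For part (2), if $m\ge n$ then only the terms with $n\le p\le m$ survive, so $f_{m,n}(v,x)=\sum_{p=n}^{m}\binom{m}{p}x^{m-p}S_{p,n}$, whose monomials are $x^{0},x^{1},\dots,x^{m-n}$; hence $\deg f_{m,n}(v,x)\le m-n$, and when $m=n$ the sum collapses to the single constant $S_{n,n}=\sum_{i=0}^{k^{n}-1}(-1)^{N_{k,v}(i)}i^{n}$.

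I do not expect a genuine obstacle: all the substance sits in Theorem~\ref{PTMlike}, and what remains is the binomial theorem plus the triviality that monomials span polynomials of bounded degree. The only point needing a little care is the bookkeeping linking the power $x^{m-p}$ to the sum $S_{p,n}$, so that the vanishing of $S_{p,n}$ for $p<n$ yields precisely the degree bound $m-n$ rather than just $\deg f_{m,n}(v,x)\le m$.
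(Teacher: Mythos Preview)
Your argument is correct and is essentially identical to the paper's own proof: both expand $(x+i)^m$ by the binomial theorem, interchange the sums, and then invoke Theorem~\ref{PTMlike} to kill the inner sums $S_{p,n}$ for $p<n$. Your write-up is just a bit more explicit about the bookkeeping, but there is no substantive difference.
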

\begin{proof}
The first part of our corollary is an immediate consequence of Theorem \ref{PTMlike}. In order to prove the second part, we write
\begin{align*}
\sum_{i=0}^{k^{n}-1}(-1)^{N_{k,v}(i)}(x+i)^{m}&=\sum_{i=0}^{k^{n}-1}(-1)^{N_{k,v}(i)}\sum_{j=0}^{m}{m\choose j}x^{m-j}i^{j}\\
                                              &=\sum_{j=0}^{m}{m\choose j}x^{m-j}\sum_{i=0}^{k^{n}-1}(-1)^{N_{k,v}(i)}i^{j}.
\end{align*}
\end{proof}

Put $F_{n,n}(v)=f_{n,n}(v,x)$.
\begin{prop}
We have $$F_{n,n}(v)=n! k^{\binom{n}{2}}\left(\sum_{\substack{0\leq j\leq k-1\\j\notin v}}j-\sum_{\substack{0\leq j\leq k-1\\j\in v}}j\right)^n.$$ \end{prop}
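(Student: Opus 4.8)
The plan is to compute $F_{n,n}(v) = f_{n,n}(v,x)$ — which by Corollary \ref{nasturcja}(2) is a constant, so we may evaluate it at $x=0$ — directly from the product formula of Lemma \ref{iden}. With the substitution $t_i = -1$ for $i\in V$ and $t_i=+1$ otherwise (exactly as in the proof of Theorem \ref{PTMlike}), Lemma \ref{iden} gives
$$
F_n(x) = \prod_{i=0}^{n-1}h_i(x) = \sum_{i=0}^{k^n-1}(-1)^{N_{k,v}(i)}x^i,
$$
where $h_i(x)=\sum_{j=0}^{k-1}t_j x^{jk^i}$. We already know $F_n(x)=(x-1)^n W_n(x)$ for some polynomial $W_n$; the coefficient $F_{n,n}(v)=\sum_i (-1)^{N_{k,v}(i)}i^n$ is, up to constants, the "leading" surviving term when we apply $\theta=x\frac{d}{dx}$ exactly $n$ times and then set $x=1$.

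The key computational step is the following: since $F_n(x)=\prod_{i=0}^{n-1}h_i(x)$ and each $h_i$ has a simple zero at $x=1$ (because $\sum_j t_j = 0$), the order of vanishing of $F_n$ at $x=1$ is exactly $n$ (not merely $\geq n$), provided each $h_i'(1)\neq 0$. One computes $h_i'(1) = \sum_{j=0}^{k-1} t_j\, j\, k^i = k^i\bigl(\sum_{j\notin v} j - \sum_{j\in v} j\bigr)$; call $S=\sum_{j\notin v}j - \sum_{j\in v}j$ the quantity appearing in the statement. Then one applies the chain rule / Leibniz rule: writing $h_i(x)=(x-1)g_i(x)$ with $g_i(1)=h_i'(1)=k^iS$, we get $F_n(x)=(x-1)^n\prod_{i=0}^{n-1}g_i(x)$, so $W_n(1)=\prod_{i=0}^{n-1}k^iS = k^{0+1+\cdots+(n-1)}S^n = k^{\binom n2}S^n$. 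Finally, the relation between $\theta^n(F_n)|_{x=1}$ and $W_n(1)$ is the standard fact that if a function vanishes to order exactly $n$ at $x=1$, then $\bigl(x\frac{d}{dx}\bigr)^n$ of it evaluated at $x=1$ equals $n!\cdot W_n(1)$ (the lower-order terms in the expansion of $\theta^n$ in terms of ordinary derivatives all kill a function vanishing to order $n$, and the top term $x^n\frac{d^n}{dx^n}$ contributes $1^n\cdot n!\,W_n(1)$). Since $\theta^n(F_n)(1)=\sum_{i=0}^{k^n-1}(-1)^{N_{k,v}(i)}i^n = F_{n,n}(v)$, this yields $F_{n,n}(v)=n!\,k^{\binom n2}S^n$, as claimed.

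I expect the main obstacle to be bookkeeping rather than conceptual: one must carefully justify that $\theta^n$ applied to a function with a zero of order exactly $n$ at $1$ picks out precisely $n!$ times the value of the "reduced" polynomial $\prod g_i$ at $1$, and that no contribution is lost by passing to $x=1$. This is cleanest if one works with the local expansion $F_n(x) = c\,(x-1)^n + O((x-1)^{n+1})$ with $c = k^{\binom n2}S^n$, notes that $\theta = x\frac{d}{dx}$ and that near $x=1$ one has $\theta^n\bigl((x-1)^n\bigr)\big|_{x=1} = n!$ (since $\theta$ and $\frac{d}{dx}$ agree to leading order at $x=1$ and higher powers of $(x-1)$ are annihilated), while $\theta^n$ of the $O((x-1)^{n+1})$ remainder still vanishes at $x=1$. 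An alternative, entirely algebraic route avoiding $\theta$ altogether: expand $i^n$ combinatorially is awkward, so instead one can substitute $x=e^T$ and read off the coefficient of $T^n$ in $F_n(e^T)=\sum_i (-1)^{N_{k,v}(i)}e^{iT}$, which is $F_{n,n}(v)/n!$; on the product side, $h_i(e^T)=\sum_j t_j e^{jk^iT}$ has $T$-expansion $k^iST + O(T^2)$, so $\prod_{i=0}^{n-1}h_i(e^T) = \bigl(\prod_i k^i S\bigr)T^n + O(T^{n+1}) = k^{\binom n2}S^n T^n + O(T^{n+1})$, giving the result immediately. I would likely present this generating-function version as it is the least error-prone.
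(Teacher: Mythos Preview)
Your proof is correct, and both the $\theta$-operator argument and the exponential substitution $x=e^T$ work cleanly. The one minor point you flag yourself---that the zero of $F_n$ at $x=1$ is of order \emph{exactly} $n$ only when $S\neq 0$---is not an obstacle: when $S=0$ both sides of the identity vanish, so the formula $F_{n,n}(v)=n!\,k^{\binom{n}{2}}S^n$ holds regardless.

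Your route is genuinely different from the paper's. The paper proceeds by induction on $n$, splitting the sum defining $F_{n+1,n+1}(v)$ according to the last base-$k$ digit $j$ of $i$, applying the binomial theorem, and using Corollary~\ref{nasturcja} to kill all lower-order terms. This yields the recurrence $F_{n+1,n+1}(v)=(n+1)k^{n}S\,F_{n,n}(v)$ (the potentially troublesome extra term involving $f_{n+1,n}(v,0)$ disappears because exactly half of the digits lie in $v$), from which the closed form follows immediately. Your argument instead exploits the product factorization $F_n(x)=\prod_{i=0}^{n-1}h_i(x)$ from Lemma~\ref{iden} directly, reading off the first nonvanishing Taylor coefficient at $x=1$ from the individual factors via $h_i'(1)=k^iS$. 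The paper's approach mirrors the inductive proof of Proposition~\ref{lem2} and requires no analytic machinery; your approach is shorter, avoids induction entirely, and the $e^T$ version in particular makes the appearance of $n!$ and of $k^{\binom{n}{2}}S^n$ completely transparent.
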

\begin{proof} We will only sketch the argument since it resembles the proof of Proposition \ref{lem2}. We have
\begin{align*}F_{n+1,n+1}(v)&=\sum_{i=0}^{k^{n+1}-1}(-1)^{N_{k,v}(i)}i^{n+1}
            =\sum_{j=0}^{k-1}\sum_{i=0}^{k^{n}-1}(-1)^{N_{k,v}(ki+j)}(ki+j)^{n+1}\\
            &=\sum_{\substack{0\leq j\leq k-1\\j\notin v}}\sum_{i=0}^{k^{n}-1}(-1)^{N_{k,v}(i)}(ki+j)^{n+1}-\sum_{\substack{0\leq j\leq k-1\\j\in v}}\sum_{i=0}^{k^{n}-1}(-1)^{N_{k,v}(i)}(ki+j)^{n+1}
\end{align*}
since $N_{k,v}(ki+j)=N_{k,v}(i)$ if $j\notin v$ and $N_{k,v}(ki+j)=N_{k,v}(i)+1$ if $j\in v$.
Note now that for a fixed $j$ we have
\begin{align*}
\sum_{i=0}^{k^{n}-1}(-1)^{N_{k,v}(i)}(ki+j)^{n+1}&=\sum_{s=0}^{n+1}\sum_{i=0}^{k^{n}-1}(-1)^{N_{k,v}(i)}\binom{n+1}{s}k^si^sj^{n+1-s}\\&=\sum_{s=0}^{n+1} \binom{n+1}{s} j^{n+1-s}k^s f_{s,n}(v,0).
\end{align*} The terms with $0\leq s<n$ vanish by Corollary \ref{nasturcja}. Therefore we have
\begin{align*}
F_{n+1,n+1}(v)&=(n+1)k^n\left(\sum_{\substack{0\leq j\leq k-1\\j\notin v}}j-\sum_{\substack{0\leq j\leq k-1\\j\in v}}j\right)F_{n,n}(v) \\&+ k^{n+1}\left(\sum_{\substack{0\leq j\leq k-1\\j\notin v}}1-\sum_{\substack{0\leq j\leq k-1\\j\in v}}1\right)f_{n+1,n}(v,0).
\end{align*} The latter sum vanishes since exactly half of $0\leq j \leq k-1$ lie in $v$. The claim follows.
\end{proof}

\bigskip

%Let us consider the family of polynomials
%\begin{equation*}
%\cal{B}_{m}(x)=\frac{1}{x}\Big(1-\prod_{i=1}^{m+1}(1-k^{i}x)\Big)=\sum_{j=0}^{m}C_{j,m}x^{j}.
%\end{equation*}
%It is clear that $H_{m}(x)\in\Z[x]$ and $\op{deg}H_{m}=m$. We have $C_{0}(x)=C_{0,0}=k, C_{1}(x)=k(k+1)-k^3x,\ldots$. Moreover, from the shape of the polynomial %$H_{m}(x)$ we easily deduce the following recurrence relation
%\begin{equation}\label{recforpol}
%H_{m}(x)=(1-k^{m+1}x)H_{m-1}(x)+k^{m+1}.
%\end{equation}
%Collecting now the coefficients on both sides of the identity (\ref{recforpol}) and comparing we observe that the coefficients of $H_{m}(x)$ satisfy the %recurrence relations:
%\begin{align*}
%&C_{0,m}=C_{0,m-1}+k^{m+1},\\
%&C_{i,m}=C_{i,m-1}-k^{m+1}C_{i-1,m},\quad\mbox{for}\quad i=1,\ldots,m-1,\\
%&C_{m,m}=-k^{m+1}C_{m-1,m-1}.
%\end{align*}
%We thus easily get that
%\begin{align*}
%C_{0,m}&=C_{0,m-1}+k^{m+1}=\ldots=\sum_{i=1}^{m+1}k^{i}=\frac{k^{m+2}-1}{k-1}-1,\\
%C_{m,m}&=-k^{m+1}C_{m-1,m-1}=\ldots=(-1)^{m}k^{\sum_{i=1}^{m+1}}=(-1)^{m}k^{\frac{(m+1)(m+2)}{2}}.
%\end{align*}

%We prove the following result.

%\begin{thm}
%Let $m\in\N$ be fixed. Then the sequence $\{B_{m}(n)\}_{n\in\N}$ satisfies the linear recurrence relation of the form
%\begin{equation*}
%B_{m}(n+1)=\sum_{j=0}^{m}C_{j,m}B_{m}(n-j).
%\end{equation*}
%\end{thm}

\section{Open questions and conjectures}\label{Section4}

In this section, we propose some related questions and conjectures. Motivated by the result obtained in Proposition \ref{nasturcja}, we state the following problem.

\begin{prob}
Present a method which allows to compute a closed expression for the polynomial
\begin{equation*}
f_{m,n}(v,x)=\sum_{i=0}^{k^{n}-1}(-1)^{N_{k,v}(i)}(x+i)^{m},
\end{equation*}
for integers $m>n$ and $v\in\cal{A}_{b}$, where $\cal{A}_{b}$ is as in Theorem \ref{PTMlike}.
\end{prob}

Next we state two conjectures which can be seen as generalizations of some results from Section \ref{Section2}.

\begin{conj}
Let $k\geq 1$ and $a_{1},\ldots,a_{k}$ be nonnegative integers. Put $A=(a_{1},\ldots,a_{k})$ and $X=(x, x_{1}, \ldots, x_{k})$ be a vector of variables. Consider the polynomial
\begin{equation*}
G_{A}(X)=\sum_{i_{1}=0}^{2^{a_{1}}-1}\sum_{i_{2}=0}^{2^{a_{2}}-1}\ldots \sum_{i_{k}=0}^{2^{a_{k}}-1}(-1)^{\sum_{j=1}^{k}s_{2}(i_{j})}\Big(x+\sum_{j=1}^{k}i_{j}x_{j}\Big)^{\sum_{j=1}^{k}a_{j}}.
\end{equation*}
Then we have the following identity
\begin{equation*}
G_{A}(X)=(-1)^{\sum_{j=1}^{k}a_{j}}2^{\sum_{j=1}^{k}\frac{a_{j}(a_{j}-1)}{2}}\Big(\sum_{j=1}^{k}a_{j}\Big)!\Big(\prod_{j=1}^{k}x_{j}^{a_{j}}\Big).
\end{equation*}
\end{conj}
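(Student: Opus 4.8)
The plan is to prove this by induction on $k$, with the base case $k=1$ being exactly the Nieto identity $f_{a,a}(x) = (-1)^a a! 2^{a(a-1)/2}$ (the case $m=0$ of Corollary \ref{closedF}), specialized by substituting $x \mapsto x$ and noting that replacing $i$ by $i x_1$ merely multiplies the $m$-th power by $x_1^{a_1}$ after the constant-polynomial phenomenon kicks in. More precisely, for $k=1$ one has $G_A(x,x_1) = \sum_{i=0}^{2^{a_1}-1}(-1)^{s_2(i)}(x+ix_1)^{a_1}$, which by expanding the binomial and using Proposition \ref{lem2} (all sums $\sum (-1)^{s_2(i)} i^j$ with $j<a_1$ vanish) collapses to the single term $x_1^{a_1}\sum_{i}(-1)^{s_2(i)}i^{a_1} = x_1^{a_1}F_{a_1,a_1}$, and $F_{a_1,a_1} = (-1)^{a_1}a_1!2^{a_1(a_1-1)/2}$ by Corollary \ref{cor1}.

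For the inductive step, I would separate the last summation variable $i_k$. Write $a = a_1+\ldots+a_{k-1}$ and $A' = (a_1,\ldots,a_{k-1})$, and expand
\begin{equation*}
G_A(X) = \sum_{i_k=0}^{2^{a_k}-1}(-1)^{s_2(i_k)}\sum_{j=0}^{a+a_k}\binom{a+a_k}{j}(i_k x_k)^{a+a_k-j}\sum_{i_1,\ldots,i_{k-1}}(-1)^{\sum_{l<k}s_2(i_l)}\Big(x+\sum_{l<k}i_l x_l\Big)^{j}.
\end{equation*}
The inner double sum over $i_1,\ldots,i_{k-1}$ is a polynomial in $x,x_1,\ldots,x_{k-1}$ of the form $f^{\mathbf u}_{j,a}(x)$ for the sequence $\mathbf u(i_1,\ldots,i_{k-1}) = \sum_{l<k} i_l x_l$ — more precisely it is the specialization of the polynomial $f_{j,a}^{\mathbf u}$ from Section \ref{Section2} where $V$ has basis corresponding to $x_1,\ldots,x_{k-1}$, $P$ is multiplication by $2$, and $q = x_1+\ldots+x_{k-1}$. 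By Proposition \ref{lem2} this vanishes for $j<a$, so only the terms $j \geq a$ survive. For $j = a$ it equals the inductive value $G_{A'}(x,x_1,\ldots,x_{k-1})$, which by hypothesis is the \emph{constant} $(-1)^a 2^{\sum_{l<k}a_l(a_l-1)/2}a!\prod_{l<k}x_l^{a_l}$ — in particular independent of $x$. The terms with $j > a$ contribute $\binom{a+a_k}{j}(i_k x_k)^{a+a_k-j}$ times a polynomial of degree $\leq j-a$ in $x$; after summing $\sum_{i_k}(-1)^{s_2(i_k)} i_k^{a+a_k-j}$ against these, and using $j > a$ so that $a+a_k-j < a_k$, each such sum over $i_k$ vanishes by Proposition \ref{lem2} applied with $n = a_k$. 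Hence only $j=a$ survives the $i_k$-summation as well, leaving
\begin{equation*}
G_A(X) = G_{A'}(x,x_1,\ldots,x_{k-1})\cdot x_k^{a_k}\sum_{i_k=0}^{2^{a_k}-1}(-1)^{s_2(i_k)}i_k^{a_k} = G_{A'}\cdot x_k^{a_k}F_{a_k,a_k},
\end{equation*}
and substituting $F_{a_k,a_k} = (-1)^{a_k}a_k!2^{a_k(a_k-1)/2}$ and the inductive formula for $G_{A'}$ gives exactly the claimed product after checking $a!a_k!\binom{a+a_k}{a} = (a+a_k)!$ — wait, that combinatorial factor does not appear, so I must be careful: the $j=a$ term already carries $\binom{a+a_k}{a}$, and the surviving $i_k$-sum is $\binom{a+a_k}{a}x_k^{a_k}\sum_{i_k}(-1)^{s_2(i_k)}i_k^{a_k}$ times $G_{A'}$, and indeed $\binom{a+a_k}{a}a!\,a_k! = (a+a_k)!$, which is precisely the factorial in the target formula. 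The power of $2$ and the sign also combine correctly.

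The main obstacle I anticipate is the bookkeeping in the inductive step: one must track that \emph{both} the $j<a$ terms (killed by the inner PTM-sum in the $i_1,\ldots,i_{k-1}$ variables) and the $j>a$ terms (killed by the outer PTM-sum in $i_k$, using that the residual $x$-degree is $< a_k$) vanish, so that the identity is genuinely forced by the two Prouhet-type vanishing results plus one evaluation of $F_{n,n}$. A subtlety worth stating carefully is why, for $j>a$, the inner sum $f_{j,a}^{\mathbf u}(x)$ has $x$-degree at most $j-a$: this is the content of the argument in Proposition \ref{lem2} combined with Lemma \ref{lem1}, or can be read off directly from the binomial expansion since the coefficients of $x^{j-p}$ for $p < a$ are sums $F_{p,a}$ that vanish. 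Once that degree bound is in hand, the collapse is mechanical. The remaining step — verifying the closed-form constants multiply correctly — is a routine computation with factorials, binomial coefficients, and the exponent identity $\binom a 2 + \binom{a_k}{2} + a\cdot a_k = \binom{a+a_k}{2}$, which I would only indicate rather than belabor.
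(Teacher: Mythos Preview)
The paper does not prove this statement: it is listed in Section~\ref{Section4} as an open conjecture, with no argument given. Your proposal is therefore not competing with any proof in the paper --- and in fact your argument is essentially a correct proof of the conjecture, by induction on $k$ with Nieto's identity as the base case. Two places need tightening, and one remark is spurious:

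\smallskip
\noindent\textbf{(i)} The inner sum over $i_1,\ldots,i_{k-1}$ is \emph{not} literally an instance of $f^{\mathbf u}_{j,a}(x)$ from Section~\ref{Section2}. The concatenation bijection between $\prod_{l<k}\{0,\ldots,2^{a_l}-1\}$ and $\{0,\ldots,2^{a}-1\}$ does not intertwine the map $i\mapsto 2i+j$ with anything of the form $\mathbf u\mapsto P(\mathbf u)+jq$ unless all the $a_l$ coincide, so Proposition~\ref{lem2} does not apply as stated. The vanishing you need for $j<a$ is nevertheless immediate from the multinomial expansion of $(x+\sum_l i_l x_l)^j$: every monomial carries a factor $\prod_{l<k} F_{\alpha_l,a_l}$, and this product vanishes unless $\alpha_l\geq a_l$ for all $l$, forcing $j\geq \sum_l a_l = a$.

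\smallskip
\noindent\textbf{(ii)} Your worry about the $x$-degree of the inner sum for $j>a$ is unnecessary. For such $j$ the power of $i_k$ is $a+a_k-j<a_k$, and the sum $\sum_{i_k=0}^{2^{a_k}-1}(-1)^{s_2(i_k)}i_k^{a+a_k-j}$ already vanishes by Prouhet; the inner sum, whatever it is, factors out and plays no role.

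\smallskip
\noindent\textbf{(iii)} The ``exponent identity'' $\binom{a}{2}+\binom{a_k}{2}+a\,a_k=\binom{a+a_k}{2}$ you invoke at the end is the wrong bookkeeping: the target exponent of $2$ is $\sum_{l=1}^{k}\binom{a_l}{2}$, and this splits trivially as $\sum_{l<k}\binom{a_l}{2}+\binom{a_k}{2}$ with no cross term. The factorials and signs you track are correct: $\binom{a+a_k}{a}\,a!\,a_k!=(a+a_k)!$ and $(-1)^a(-1)^{a_k}=(-1)^{a+a_k}$.

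\smallskip
With (i) fixed by the multinomial argument, the induction closes and the conjecture is proved.
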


\begin{conj}
Let $m\geq 1$ and $a_{1},\ldots,a_{k}$ be nonnegative integers. Put $A=(a_{1},\ldots,a_{k})$ and $X=(x_{1}, \ldots, x_{m}), Y=(y_{1}, \ldots, y_{m})$ be vectors of variables. Consider the polynomial
\begin{equation*}
G_{A}(X,Y)=\sum_{i_{1}=0}^{k^{a_{1}}-1}\sum_{i_{2}=0}^{k^{a_{2}}-1}\ldots \sum_{i_{k}=0}^{k^{a_{m}}-1}\zeta_{k}^{\sum_{j=1}^{m}s_{k}(i_{j})}\Big(\sum_{j=1}^{m}(s_{k}(i_{j})x_{j}+i_{j}y_{j})\Big)^{\sum_{j=1}^{m}a_{j}}.
\end{equation*}
Then:
\begin{enumerate}
\item If $k=2$, then \begin{equation*}
G_{A}(X,Y)=(-1)^{\sum_{j=1}^{m}a_{j}}\Big(\sum_{j=1}^{m}a_{j}\Big)!
\prod_{j=1}^{m}\Big(\prod_{i_{j}=0}^{a_{j}-1}(x_{j}+2^{i_{j}}y_{j})\Big).
\end{equation*}

\item If $k>2$, then
\begin{equation*}
\prod_{j=1}^{m}\Big(\prod_{i_{j}=0}^{n_{j}-1}(x_{j}+k^{i_{j}}y_{j})\Big)\Big|G_{A}(X,Y).
\end{equation*}
\end{enumerate}
\end{conj}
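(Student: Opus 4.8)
The plan is to deduce this conjecture — in fact a sharper statement valid for every $k\geq 2$ — from the results of Section \ref{Section2}, the only extra input being the multinomial theorem. (I read the statement with data $a_1,\ldots,a_m$, summation variables $i_1,\ldots,i_m$ with $0\leq i_j\leq k^{a_j}-1$, and $n_j=a_j$ in part (2); the indices $k$ appearing in place of $m$ in the displayed sum are typos.) First I would introduce, for each $j\in\{1,\ldots,m\}$, the sequence $\mathbf u_j(n)=s_k(n)x_j+ny_j$ with values in the two-dimensional $K$-vector space with basis $x_j,y_j$; exactly as in Corollaries \ref{cor2} and \ref{cor3}, $\mathbf u_j$ satisfies the recurrence (\ref{czajnik}) with $\mathbf u_j(0)=0$, $q=x_j+y_j$ and $P=\bigl(\begin{smallmatrix}1&0\\0&k\end{smallmatrix}\bigr)$, so that $P^i(q)=x_j+k^iy_j$. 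Writing $N=\sum_{j=1}^m a_j$ and noticing $\sum_{j=1}^m(s_k(i_j)x_j+i_jy_j)=\sum_{j=1}^m\mathbf u_j(i_j)$, I would expand the $N$-th power by the multinomial theorem and interchange the order of summation:
\begin{equation*}
G_A(X,Y)=\sum_{\substack{c_1+\cdots+c_m=N\\ c_1,\ldots,c_m\geq 0}}\binom{N}{c_1,\ldots,c_m}\prod_{j=1}^m\Big(\sum_{i_j=0}^{k^{a_j}-1}\zeta_k^{s_k(i_j)}\mathbf u_j(i_j)^{c_j}\Big)=\sum_{c_1+\cdots+c_m=N}\binom{N}{c_1,\ldots,c_m}\prod_{j=1}^m F_{c_j,a_j}^{\mathbf u_j}.
\end{equation*}

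Next I would invoke Proposition \ref{lem2}: the factor $F_{c_j,a_j}^{\mathbf u_j}$ vanishes whenever $c_j<a_j$, so since $c_1+\cdots+c_m=N=a_1+\cdots+a_m$ the only term in which no factor vanishes is the one with $c_j=a_j$ for every $j$. Hence $G_A(X,Y)=\binom{N}{a_1,\ldots,a_m}\prod_{j=1}^m F_{a_j,a_j}^{\mathbf u_j}$. Now Corollary \ref{cor1}, applied to each $\mathbf u_j$ (so that $d_k=k^{a_j}/(\zeta_k-1)^{a_j}$ and $\prod_{i=0}^{a_j-1}P^i(q)=\prod_{i=0}^{a_j-1}(x_j+k^iy_j)$), gives $F_{a_j,a_j}^{\mathbf u_j}=\frac{k^{a_j}}{(\zeta_k-1)^{a_j}}\,a_j!\prod_{i=0}^{a_j-1}(x_j+k^iy_j)$; multiplying these out, the factorials cancel against the multinomial coefficient and I obtain
\begin{equation*}
G_A(X,Y)=N!\,\frac{k^{N}}{(\zeta_k-1)^{N}}\prod_{j=1}^m\prod_{i=0}^{a_j-1}(x_j+k^iy_j).
\end{equation*}
For $k=2$ one has $\zeta_k=-1$ and $k^N/(\zeta_k-1)^N=2^N/(-2)^N=(-1)^N$, which is precisely part (1); for $k>2$ the divisibility in part (2) is then immediate (and the displayed closed form says considerably more).

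I do not expect a genuine obstacle here: the argument is essentially bookkeeping on top of Section \ref{Section2}. The only steps that require attention are checking that each $\mathbf u_j$ meets the hypotheses of (\ref{czajnik}) and of Corollary \ref{cor1} (in particular $\mathbf u_j(0)=0$, which is what licenses the closed form for $F_{n,n}$), and the one substantive observation: expanding the total power decouples the multiple sum into a product of the one-variable sums $F_{c_j,a_j}^{\mathbf u_j}$, and the vanishing range $c_j<a_j$ of Proposition \ref{lem2} then forces $c_j=a_j$ for every $j$ precisely because the exponents add up to $N$. The same mechanism settles the preceding conjecture on $G_A(X)$: first expand in powers of the extra variable $x$, observe that the coefficient of $x^{N-d}$ with $d<N$ is a sum of products $\prod_j F_{c_j,a_j}$ with $\sum_j c_j=d<N$ and hence vanishes, so $G_A(X)$ is independent of $x$, and then repeat the argument above with the one-dimensional sequences $n\mapsto nx_j$ and Corollary \ref{cor1} (equivalently Corollary \ref{closedF}).
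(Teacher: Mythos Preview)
Your argument is correct, and in fact it resolves what the paper states only as a conjecture: Section~\ref{Section4} lists this statement among open problems and gives no proof at all. Your decoupling via the multinomial theorem is precisely the missing observation. Once one writes
\[
G_A(X,Y)=\sum_{c_1+\cdots+c_m=N}\binom{N}{c_1,\ldots,c_m}\prod_{j=1}^m F_{c_j,a_j}^{\mathbf u_j},
\]
Proposition~\ref{lem2} kills every summand except $c_j=a_j$ for all $j$, and Corollary~\ref{cor1} supplies the closed form for $F_{a_j,a_j}^{\mathbf u_j}$. All hypotheses are met: each $\mathbf u_j(n)=s_k(n)x_j+ny_j$ satisfies recurrence~(\ref{czajnik}) with $\mathbf u_j(0)=0$, $q=x_j+y_j$, $P=\bigl(\begin{smallmatrix}1&0\\0&k\end{smallmatrix}\bigr)$, so $P^i(q)=x_j+k^iy_j$; and the multinomial expansion is legitimate since all the $\mathbf u_j(i_j)$ live in the commutative ring $\mathrm{Sym}(V_1\oplus\cdots\oplus V_m)=K[x_1,y_1,\ldots,x_m,y_m]$. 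Your reading of the typos ($a_1,\ldots,a_m$, summation index $i_m$, and $n_j=a_j$) is the only sensible one.

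You obtain more than the conjecture asks: for every $k\geq 2$ and every $k$-th root of unity $\zeta_k\neq 1$ you get the exact identity
\[
G_A(X,Y)=\Big(\sum_{j=1}^m a_j\Big)!\,\frac{k^{\,\sum_j a_j}}{(\zeta_k-1)^{\sum_j a_j}}\;\prod_{j=1}^m\prod_{i=0}^{a_j-1}(x_j+k^{i}y_j),
\]
of which part~(1) is the specialization $k=2$ and part~(2) a weak consequence. Your closing remark that the same mechanism dispatches the preceding conjecture on $G_A(X)$ is also correct: the coefficient of each positive power of $x$ is a combination of products $\prod_j F_{c_j,a_j}$ with $\sum_j c_j<N$, hence zero, and the constant term is handled by the one-dimensional case $\mathbf u_j(n)=n x_j$ together with Corollary~\ref{cor1}.
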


The next conjecture asks about a different generalization of some of the  results obtained in Section \ref{Section3}.

\begin{conj}
Let $m, n\geq 1$ be integers, $t$ be a variable and put $X=(x_{1},\ldots,x_{m})$. Consider the polynomial
\begin{equation*}
H_{m,n}=H_{m,n}(X,t)=\sum_{i_{1}=0}^{2^{n}-1}\sum_{i_{2}=0}^{2^{n}-1}\ldots \sum_{i_{m}=0}^{2^{n}-1}(-1)^{s_{2}(\sum_{j=1}^{m}i_{j})}\Big(t+\sum_{j=1}^{m}i_{j}x_{j}\Big)^{n}.
\end{equation*}
We then have $\op{deg}_{t}H_{m,n}(X,t)=m-1$. In particular,
\begin{align*}
&H_{1,n}=(-1)^{n}n!2^{\frac{n(n-1)}{2}}x_{1}^{n},\\
&H_{2,n}=(-1)^{n}n!2^{\frac{n(n-1)}{2}}\Big(2\frac{x_{1}^{n}-x_{2}^{n}}{x_{1}-x_{2}}t+2^{n}\frac{x_{1}^{n+1}-x_{2}^{n+1}}{x_{1}-x_{2}}+x_{1}x_{2}(2^{n}-1)\frac{x_{1}^{n-1}-x_{2}^{n-1}}{x_{1}-x_{2}}\Big).
\end{align*}
\end{conj}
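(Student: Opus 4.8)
The plan is to split the claimed equality into an upper bound $\op{deg}_tH_{m,n}\le m-1$, which should hold unconditionally, and the matching lower bound, which is where I expect the real difficulty. Expanding $\big(t+\sum_j i_jx_j\big)^n$ by the binomial theorem, the coefficient of $t^{\,n-l}$ in $H_{m,n}$ equals $\binom{n}{l}\sigma_l(X)$, where
$$\sigma_l(X)=\sum_{i_1,\ldots,i_m=0}^{2^n-1}(-1)^{s_2(i_1+\ldots+i_m)}\Big(\sum_{j=1}^m i_jx_j\Big)^{l}$$
is a form of degree $l$ in $X$; since $\binom{n}{l}\ne0$ for $0\le l\le n$ we have $\op{deg}_tH_{m,n}=n-\min\{l:\sigma_l\not\equiv0\}$, so the statement is equivalent to: $\sigma_l\equiv0$ for $l\le n-m$, together with $\sigma_{n-m+1}\not\equiv0$. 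Expanding $\sigma_l$ in monomials, the first part is in turn equivalent to $\sum_{\mathbf i\in\{0,\ldots,2^n-1\}^m}(-1)^{s_2(|\mathbf i|)}p(\mathbf i)=0$ for every polynomial $p$ of total degree $\le n-m$, where $|\mathbf i|=i_1+\ldots+i_m$.

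For this vanishing I would pass to $\bar p(N)=\sum_{\mathbf i\in\{0,\ldots,2^n-1\}^m,\ |\mathbf i|=N}p(\mathbf i)$, so the sum becomes $\sum_{N\ge0}(-1)^{s_2(N)}\bar p(N)$, and use the elementary identity $\sum_{N\ge0}(-1)^{s_2(N)}g(N)=\big[\prod_{k=0}^{K-1}(1-E_{2^k})g\big](0)$ (valid for finitely supported $g$ and $2^K>\max\op{supp}g$, where $E_t$ is the shift $(E_tg)(x)=g(x+t)$; expand $\prod_k(1-E_{2^k})=\sum_{S}(-1)^{|S|}E_{\sum_{k\in S}2^k}$). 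Factor off $\prod_{k=0}^{n-1}(1-E_{2^k})$ and put $\tilde p(N)=\sum_{\ell\ge0}(-1)^{s_2(\ell)}\bar p(N+\ell\,2^n)$, which equals $\big[\prod_{k\ge n}(1-E_{2^k})\big]\bar p$ (a finite sum). The key step is to show that $\tilde p$ agrees, on $\{0,1,\ldots,2^n-1\}$, with a single polynomial of degree $\le\deg p+m-1\le n-1$; granting this, $\prod_{k=0}^{n-1}(1-E_{2^k})$ — a composition of $n$ operators each lowering polynomial degree by one — annihilates that polynomial, and since $g\mapsto\big[\prod_{k=0}^{n-1}(1-E_{2^k})g\big](0)$ only uses the values of $g$ on $\{0,\ldots,2^n-1\}$, the sum is $0$. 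The key step rests on the spline structure of $\bar p$: inclusion–exclusion on the constraints $i_j\le2^n-1$ gives $\bar p(N)=\sum_{T\subseteq\{1,\ldots,m\}}(-1)^{|T|}\rho_T(N-|T|2^n)\,[\,N\ge|T|2^n\,]$, where $\rho_T(M)=\sum_{\mathbf i\ge0,\ |\mathbf i|=M}p(\mathbf i+2^n\mathbf 1_T)$ is (an Ehrhart-type fact) a polynomial in $M$ of degree $\le\deg p+m-1$; for $N\in\{0,\ldots,2^n-1\}$ each $\bar p(N+\ell2^n)$ is then the $\ell$-th polynomial piece, except for a correction at the top cell, supported on the $m-1$ arguments $N\in\{2^n-m+1,\ldots,2^n-1\}$ and equal there to a constant multiple of $\rho_{\{1,\ldots,m\}}(N-2^n)$, which vanishes because the polynomial $\rho_{\{1,\ldots,m\}}$ has $M=-1,-2,\ldots,-(m-1)$ among its roots.

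Running the same argument with $\deg p=n-m+1$ gives the lower bound modulo one arithmetic obstruction, which is where the main difficulty sits. Now $\tilde p$ agrees on $\{0,\ldots,2^n-1\}$ with a polynomial $Q$ of degree $\le n$, so $g\mapsto\big[\prod_{k=0}^{n-1}(1-E_{2^k})g\big](0)$ extracts only the top coefficient: the sum equals $q_n\cdot\sum_{i=0}^{2^n-1}(-1)^{s_2(i)}i^n=q_n\cdot(-1)^n n!\,2^{n(n-1)/2}$ by Nieto's identity, $q_n$ being the leading coefficient of $Q$. Tracking $q_n$ — using that the leading coefficient of $\rho_T$ is independent of $T$ — yields $\sigma_{n-m+1}(X)=(-1)^n n!\,2^{n(n-1)/2}\,C_m\,\kappa(X)$, where $\kappa(X)$ is a nonzero form of degree $n-m+1$ (a positive multiple of $\int_{\Delta_{m-1}}(\mathbf y\cdot X)^{n-m+1}\,d\mathbf y$) and
$$C_m=\sum_{\ell=0}^{m-1}(-1)^{s_2(\ell)}\sum_{t=0}^{\ell}(-1)^t\binom{m}{t}=\sum_{\ell=0}^{m-1}(-1)^{s_2(\ell)+\ell}\binom{m-1}{\ell}.$$
Hence $\op{deg}_tH_{m,n}=m-1$ exactly when $C_m\ne0$, while $\op{deg}_tH_{m,n}\le m-1$ holds always. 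The obstacle is precisely here: $C_1=1$ and $C_2=C_3=2$, confirming the conjecture for $m=1,2,3$, but $C_4=0$, so for $m=4$ the argument only gives $\op{deg}_tH_{4,n}\le m-2$ and the conjecture as stated cannot hold there. The genuine open problem that remains is to determine the set $\{m:C_m=0\}$ and, for such $m$, the true value of $\op{deg}_tH_{m,n}$.
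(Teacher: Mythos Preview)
The statement you are addressing is not a theorem in the paper but a \emph{conjecture} in the final section; the paper offers no proof, so there is nothing to compare your argument against. What you have produced is therefore not a reproduction of a known argument but an independent attack on an open problem, and it should be read as such.

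Your decomposition via the binomial theorem and the reduction to the vanishing of the forms $\sigma_l$ is the natural first move. The upper bound $\deg_t H_{m,n}\le m-1$ (equivalently, $\sigma_l\equiv 0$ for $l\le n-m$) via the shift-operator identity $\sum_N(-1)^{s_2(N)}g(N)=\bigl[\prod_{k}(1-E_{2^k})g\bigr](0)$ together with the piecewise-polynomial structure of $\bar p$ looks correct in outline, though two steps deserve a fuller write-up: (i) the claim that the Ehrhart-type polynomial $\rho_T(M)$ vanishes at $M=-1,\ldots,-(m-1)$ for \emph{every} polynomial integrand $p$ (this is true, but it is worth stating the reciprocity argument explicitly rather than by allusion), and (ii) the bookkeeping showing that the ``top-cell correction'' really is the only obstruction to $\tilde p$ being a single polynomial on $\{0,\ldots,2^n-1\}$.

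The most interesting part of your note is the lower-bound analysis. Your computation that the coefficient of $t^{m-1}$ is governed by
\[
C_m=\sum_{\ell=0}^{m-1}(-1)^{s_2(\ell)+\ell}\binom{m-1}{\ell}
\]
and that $C_4=0$ is, if the derivation of the leading coefficient $q_n$ is correct, a disproof of the conjecture as stated: for $m=4$ (and $n\ge 4$) one would get $\deg_t H_{4,n}\le 2$, not $3$. This is a substantive observation that goes beyond anything in the paper. Before declaring the conjecture false, however, you should (a) verify the formula for $q_n$ carefully---in particular, check that no lower-order pieces of the $\rho_T$ contribute to the degree-$n$ term of $Q$ through the shifts $(\ell-|T|)2^n$---and (b) confirm the conclusion by a direct machine computation of $H_{4,n}$ for a small value of $n$ (say $n=4$ or $n=5$). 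Note also that the conjecture is already problematic for $n<m-1$ on trivial degree grounds, so the intended regime is presumably $n\ge m$; your analysis respects this.
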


%\begin{conj}
%For $k, n\in\N$ let us define $P_{k,n}(x)=\prod_{i=0}^{n-1}(1+k^{i}x)$ and consider the polynomial
%\begin{equation*}
%G_{m,n}(u,v)=\sum_{i=0}^{k^{m}-1}\sum_{j=0}^{k^{n}-1}\zeta_{k}^{s_{k}(i)+s_{k}(j)}(iu+jv+s_{k}(i)+s_{k}(j))^{m+n}.
%\end{equation*}
%Then we have the following identity
%\begin{equation*}
%G_{m,n}(u,v)=C(m,n,k)P_{k,m}(v)P_{k,n}(u),
%\end{equation*}
%where $C$ is a function depending only on $m, n, k$. It seems that $C(m,n,k)=\overline{\zeta_{k}}^{(m+n)\pmod{k}}(m+n)!k^{m+n}(\zeta_{k}-1)^{m+n} \times (some %powers of divisors of k)$.
%\end{conj}

\bigskip

\bigskip

\noindent Jagiellonian University, Faculty of Mathematics and Computer Science, Institute of Mathematics, {\L}ojasiewicza 6, 30 - 348 Krak\'{o}w, Poland;

\noindent email: {\tt \{jakub.byszewski, maciej.ulas\}@uj.edu.pl}

 \end{document}